\providecommand{\U}[1]{\protect \rule{.1in}{.1in}}
\numberwithin{equation}{section}
\newtheorem{Theorem}{Theorem}[section]
\newtheorem{corollary}[Theorem]{Corollary}
\newtheorem{definition}[Theorem]{Definition}
\newtheorem{lemma}[Theorem]{Lemma}
\newtheorem{remark}[Theorem]{Remark}
\newenvironment{proof}[1][Proof]{\noindent \textbf{#1.} }{\  \rule{0.5em}{0.5em}}
\begin{document}
\title{A unified approach to global solvability for FBSDEs with diagonal generators}
\author{Tianjiao Hua \thanks{School of Mathematical Sciences, Shanghai Jiao Tong University, China (htj960127@sjtu.edu.cn).}
\and
Peng Luo \thanks{School of Mathematical Sciences, Shanghai Jiao Tong University, China (peng.luo@sjtu.edu.cn). Research supported by the
National Natural Science Foundation of China (No. 12101400).}}
\maketitle
\begin{abstract}
In this paper, we study the global solvability of  multidimensional forward–backward
stochastic differential equations (FBSDEs) with diagonally Lipschitz, quadratic or super-quadratic generators. Under a certain “monotonicity” condition, we provide a unified approach which shows that there exists a decoupling field that is uniformly Lipschitz in its spatial variable. This decoupling field is closely related to bounded solution to an associated characteristic BSDE. For Lipschitz case, we provide some extensions and investigate $L^p$-solution and $L^p$ estimates. Our results gives a positive answer to a question proposed in Yong (Banach Center Publ. 122: 255-286, 2020). Applications to stochastic optimal controls and stochastic differential games are investigated.
\end{abstract}
\textbf{Key words}: forward-backward stochastic differential equations, diagonal generators, global solution, monotonicity condition, decoupling field.

\textbf{MSC-classification}: 60H30, 93H20.
\section{Introduction}
A fully coupled FBSDE takes the following form:
\begin{equation}
 \left\{\begin{aligned}
X_{t} &=  x+\int_{0}^{t} b\left(s, X_{s}, Y_{s}, Z_{s}\right) d s+\int_{0}^{t} \sigma\left(s, X_{s}, Y_{s}, Z_{s}\right) d W_{s}, \\
Y_{t} &=  g\left(X_{T}\right)+\int_{t}^{T} f\left(s, X_{s}, Y_{s}, Z_{s}\right) d s-\int_{t}^{T} Z_{s} d W_{s}, \quad t\in [0,T]
\end{aligned}\right. \label{FBSDE3} 
\end{equation}
for a given initial value $x$ and a multidimensional Brownian motion $W$. This system naturally appears in numerous areas of applied mathematics including stochastic control and mathematical finance.\\
\indent The theory of (fully) coupled FBSDEs started in early 1990's. Using the \textit{Method of Contraction Mapping}, Antonelli \cite{antonelli1993} obtained the first result on the solvability of
FBSDEs \eqref{FBSDE3} when the duration T is relatively small and later  detailed in  \cite{pardoux1999forward}. However, arbitrary time duration case is more involved. There have been three main methods to treat FBSDEs with arbitrary duration: the \textit{Four Step Scheme} by Ma–Protter–Yong \cite{ma1994}, the \textit{Method of Continuation}
 by Hu-Peng \cite{hu1995solution} and Peng-Wu \cite{peng1999}, and the \textit{decoupling field method} by Ma-Wu-Zhang-Zhang \cite{ma2015}. \textit{Four step scheme} requires the non-degeneracy of the forward diffusion and the non-randomness of the
coefficients since it makes use of  quasilinear partial differential equation; \textit{Method of Continuation} requires essentially the “monotonicity” condition on the coefficients, which is restrictive in a different way. \\
\indent The \textit{decoupling field method} 
 \cite{ma2015} aims to find a function $u$ as in the \textit{Four step scheme}, such that
\begin{equation}
    Y_{t} = u(t,X_{t})\label{decoupling}.
\end{equation}
However $u$ can be a random field, then the method can solve general non-Markovian FBSDEs without the non-degeneracy of the forward diffusion. The key issue is the existence of a decoupling field that is uniformly Lipschitz in its spatial variable. Zhang \cite{zhang2006} obtained global solution of FBSDE \eqref{FBSDE3} in the case $\sigma = \sigma (t,x,y)$. The idea was later extended by Ma-Wu-Zhang-Zhang \cite{ma2015}, which gives a set of sufficient conditions to get the uniform bound by studying two dominating ODEs in one-dimensional case. Multidimensional case is further studied by Fromm and Imkeller in \cite{fromm2013existence} and Zhang in \cite{zhang2017wellposedness}.\\
\indent In this paper, we give a unified approach to solve FBSDEs with diagonal generators for which we can show the existence of a decoupling field that
is uniformly Lipschitz in its spatial variable under some monotonicity conditions. Based on this result, we obtain global solutions for FBSDEs with diagonal Lipschitz, diagonal quadratic and diagonal super-quadratic generators. Our approach is motivated by the works of Ma-Wu-Zhang-Zhang \cite{ma2015}. By borrowing the method of decoupling field, we show that the global solvability of FBSDEs \eqref{FBSDE3} with $b=b(t,x,y)$ and $ \sigma = \sigma(t)$ is closely related to the uniform boundedness of the value process of an associated BSDE. Under monotonicity conditions and diagonal structure of the generators, we obtain the uniform bound of the value process of the associated BSDE by using comparison theorem for multidimensional BSDEs. For Lipschitz generators, global solvability results are obtained for more general cases. \\
\indent Our results contribute to the literature in the following ways. First, we provide a new kind of monotonicity conditions to solve  FBSDEs globally. In Lipschitz case, we extend part of the results of \cite{ma2015} to multidimensional case.  We are able to solve some FBSDEs where the monotonicity conditions required in \cite{peng1999} are not satisfied (see Remark \ref{continuation remark}). Compared with \cite{antonelli2006existence}, we can get the uniqueness of the solution. Moreover, we relax the conditions needed in \cite{zhang2017wellposedness}. Second, our method does not need the non-degeneracy of the diffusion process, which is often necessary to get global solution with PDE method in Markovian setting (see \cite{delarue2002existence,kupper2019}). Therefore, our results may be applied to a wider range of problems. Specially, global solvability for a special time-delayed BSDE is presented. Third, we obtain global solutions for quadratic and super-quadratic FBSDEs. To the best of our knowledge, few works exist considering the existence of solutions of quadratic or super-quadratic FBSDEs, particularly, the existence of global solutions. We extend the results of \cite{jackson2021global,kupper2019,luo2017solvability} in different aspects (see Remark \ref{quadratic remark}). Finally, for Lipschitz case, we establish $L^{p}$-solution and $L^{p}$ estimates globally, which give a positive answer to a question proposed in \cite{yong2020p}.  Some applications to stochastic optimal controls and stochastic differential games are investigated.\\
\indent The rest of the paper is organized as follows. In section 2, we present the notations. In section 3, we first revisit some well-posedness results for FBSDEs with diagonal Lipschitz, diagonal quadratic and diagonal super-quadratic generators over small time duration, and then prove the existence and uniqueness over arbitrary large time interval. Section 4 gives some extensions for Lipschitz case: on one hand, we deal with a larger set of  FBSDEs, and on the other one, we study $L^{p}$-solution and establish $L^{p}$ estimates. In section 5, we discuss the connection between FBSDEs and BSDEs with time-delayed generators. Section 6 is denoted to give applications to stochastic optimal controls and stochastic differential games.
\section{Preliminaries}
\indent We work on a filtered probability space $\left(\Omega, \mathcal{F},\left(\mathcal{F}_{t}\right)_{t \in[0, T]}, P\right)$ with $T \in(0, \infty)$. We assume that the filtration is generated by a $d$-dimensional Brownian motion $W$, is complete and right continuous. Let us also assume that $\mathcal{F}=\mathcal{F}_{T}$. Unless otherwise stated, all equalities and inequalities between random variables and processes will be understood in the $P$-a.s. and $P \otimes d t$-a.e. sense, respectively. $|\cdot|$ denotes the Euclidean norm and $\langle \cdot,\cdot \rangle$ denotes the inner product. We use the exponent $\mathsf{T}$ to denote the transpose of a matrix. For $x, y \in \mathbb{R}^{n}, x \leq y$ is understood component-wisely, i.e., $x \leq y$ if and only if $x^{i} \leq y^{i}$ for all $i=1, \ldots, n$. For $p \ge 1$, we denote by
\begin{itemize}
    \item $\mathcal{S}^{\infty}\left(\mathbb{R}^{n}\right)$ the set of $n$-dimensional continuous adapted processes $Y$ on $[0, T]$ such that
\begin{equation*}
\|Y\|_{\mathcal{S}}{ }_{\left(\mathbb{R}^{n}\right)}:=\left\|\sup _{0 \leq t \leq T}\left|Y_{t}\right|\right\|_{\infty}<\infty ;
\end{equation*}
\item $L^{p}\left(\mathcal{F}_{t} ; \mathbb{R}^{n}\right)$ the set of $n$-dimensional $\mathcal{F}_{t}$-measurable random variables $\xi$ such that
\begin{equation*}
E\left[ |\xi|^{p}\right]^{\frac{1}{p}}<\infty;
\end{equation*}
\item $L^{\infty}\left(\mathcal{F}_{t} ; \mathbb{R}^{n}\right)$ the set of $n$-dimensional $\mathcal{F}_{t}$-measurable random variables $\xi$ such that
\begin{equation*}
\|\xi\|_{\infty}<\infty;
\end{equation*}
\item  $\mathcal{S}^{p}\left(\mathbb{R}^{n}\right)$ the set of adapted and continuous processes $X$ valued in $\mathbb{R}^{n}$ such that
\begin{equation*}
    \|X\|_{\mathcal{S}^{p}\left(\mathbb{R}^{n}\right)}^{p}:=E\left[\sup _{0 \leq t \leq T} \left|X_{t}\right|^{p}\right]<\infty;
\end{equation*}
\item $\mathcal{H}^{p}\left(\mathbb{R}^{n \times d}\right)$ the set of predictable processes $Z$ valued in $\mathbb{R}^{n \times d}$ such that
\begin{equation*}
    \|Z\|_{\mathcal{H}^{p}\left(\mathbb{R}^{n \times d}\right)}^{p}:=E\left[\left(\int_{0}^{T}\left|Z_{u}\right|^{2} d u\right)^{p / 2}\right]<\infty.
\end{equation*}
\end{itemize}
\indent For a suitable integrand $Z$, we denote by $Z \cdot W$ the stochastic integral $\left(\int_{0}^{t} Z_{u} d W_{u}\right)_{t \in[0, T]}$ of $Z$ with respect to $W$. From Protter \cite{protter2005stochastic}, $Z \cdot W$ defines a continuous martingale for any $Z \in \mathcal{H}^{p}\left(\mathbb{R}^{n \times d}\right)$. Let us further define by $\mathrm{BMO}_{p}$, with $p \in[1, \infty)$, the space of martingales $M$ valued in $\mathbb{R}^{n}$ such that
\begin{equation*}
\|M\|_{\mathrm{BMO}_{p}}:=\sup _{\tau}\left\|E\left[\left|M_{T}-M_{\tau}\right|^{p} d u \mid \mathcal{F}_{\tau}\right]^{1 / p}\right\|_{\infty}<\infty,
\end{equation*}
where the supremum is taken over all stopping times valued in $[0, T]$. In the sequel, we will denote $\mathrm{BMO}$ the space $\mathrm{BMO}_{2}$. The properties of the BMO space and norm can be found in \cite{kazamaki2006continuous}. Throughout the paper, for any $x \in \mathbb{R}$ and any function $\phi(x)$, we will use the following convention
 \begin{equation*}
    \frac{\phi(x)-\phi(x)}{x-x}:=0.
\end{equation*}  
\section{FBSDEs with diagonal generators}
\indent In this section, we consider the following coupled forward-backward stochastic differential equations (FBSDEs)
\begin{equation}
  \left\{\begin{array}{l}
X_t=x+\int_0^t b\left(s, X_s, Y_s\right) d s+\int_0^t \sigma_s d W_s, \\
Y_t^i=h^i\left(X_T\right)+\int_t^T f^i\left(s, X_s, Y_s, Z_s^i\right) d s-\int_t^T Z_s^i d W_s, \quad i=1,2, \cdots, n,
\end{array}\right. \label{FBSDE} 
\end{equation}
where the generator $f$ has a diagonal structure and is allowed to be Lipschitz, quadratic and super-quadratic in $Z$.\\
\indent Let $K$ be a positive constant, we will make the following assumptions.
\begin{itemize}
    \item [\textbf{(H)}]
    \begin{itemize}
        \item [(i)] $b: \Omega \times[0, T] \times \mathbb{R} \times \mathbb{R}^{n} \rightarrow \mathbb{R}$ is progressive measurable and
\begin{equation*}
\left|b(t,x, y)-b\left(t,x^{\prime}, y^{\prime}\right)\right| \leq K(\left|x-x^{\prime}\right|+\left|y-y^{\prime}\right|)
\end{equation*}
  for all $x, x^{\prime} \in \mathbb{R}$ and $y, y^{\prime} \in \mathbb{R}^{n}$.
  \item [(ii)] $h: \Omega \times \mathbb{R} \rightarrow \mathbb{R}^{n}$ is $\mathcal{F}_{T}$-measurable and
\begin{equation*}
\left|h(x)-h\left(x^{\prime}\right)\right| \leq K\left|x-x^{\prime}\right|
\end{equation*}
for all $x, x^{\prime} \in \mathbb{R}$.
\item[(iii)] $f:\Omega \times [0, T] \times \mathbb{R} \times \mathbb{R}^{n} \times \mathbb{R}^{n \times d} \rightarrow \mathbb{R}^{n}$ is progressive measurable and $f^{i}(t,x,y,z) = f^{i}(t,x,y,z^{i})$ for $i=1,2,\cdots,n$.
    \end{itemize}
    \item[\textbf{(A1)}]
    \begin{itemize}
        \item[(i)] The function $f(t,\cdot, \cdot, \cdot)$ is continuous for each $t \in[0, T]$ such that
\begin{equation*}
\left|f(t,x, y, z)-f\left(t,x^{\prime}, y^{\prime}, z^{\prime}\right)\right| \leq K(\left|x-x^{\prime}\right|+|y-y^{\prime}|+|z-z^{\prime}|)
\end{equation*}
for all $x, x^{\prime} \in \mathbb{R}, y,y^{\prime} \in \mathbb{R}^{n}$ and $z,z^{\prime} \in \mathbb{R}^{n \times d}$.
\item[(ii)]The following integrability condition holds
\begin{equation*}
    E\left[\left(\int_{0}^{T}|b(t,0,0)|dt\right)^{2}+\left(\int_{0}^{T}|f(t,0,0,0)|dt\right)^{2}+\int_{0}^{T}|\sigma(t)|^{2}dt+|h(0)|^{2}\right]<\infty.
\end{equation*}
    \end{itemize}
  \item [\textbf{(A2)}]
   \begin{itemize}
       \item[ (i) ]
       There exists $\lambda \ge 0 $ such that
        \begin{equation*}
    |h(x)|\le \lambda
         \end{equation*}
for all $x \in \mathbb{R}$.
      \item [ (ii) ] 
      $\sigma: \Omega \times[0, T] \rightarrow \mathbb{R}^{d} $ is predictable such that $\sigma \in \mathcal{H}^{2}\left(\mathbb{R}^{d}\right)$. 
      \item [ (iii) ]It holds that
\begin{equation*}
\begin{aligned}
&\left|b(t,x, y)\right| \leq K(1+|x|+|y|),\\
&\left|f(t,x,y,z)-f\left(t,x^{\prime}, y^{\prime},z^{\prime} \right)\right| \leq K(\left|x-x^{\prime}\right|+\left|y-y^{\prime}\right|)+ K\left(1+|z|+\left|z^{\prime}\right|\right)\left|z-z^{\prime}\right|,\\
&\left|f(t,x,y,z)\right| \leq K \left(1+|y|+|z|^{2}\right)
\end{aligned}
\end{equation*}
for all $x, x^{\prime} \in \mathbb{R}, y, y^{\prime} \in \mathbb{R}^{n}$ and $z, z^{\prime} \in \mathbb{R}^{n\times d}$.
  \end{itemize}
  \item [\textbf{(A3)}]
  \begin{itemize}
    \item[(i)] The functions
  $b,h,f$ are deterministic and $b$ satisfies 
  \begin{equation*}
      \left|b(t,x, y)\right| \leq K(1+|x|+|y|).
  \end{equation*}
   \item[(ii)]
       $\sigma:[0, T] \rightarrow \mathbb{R}^{d}$ is measurable and  $\left|\sigma_{t}\right| \leq K $ for all $t \in[0, T].$
       \item[(iii)]
      The function $f(t,\cdot, \cdot ,\cdot )$ is continuous for each $t \in[0, T]$, $\int_{0}^{T}\left|f(t,0,0,0)\right|^{2} dt < \infty$ and there exists a non-decreasing function $\rho: \mathbb{R}_{+} \rightarrow \mathbb{R}_{+}$ such that
\begin{equation*}
\left|f(t,x, y, z)-f\left(t,x^{\prime}, y, z\right)\right| \leq K\left|x-x^{\prime}\right|
\end{equation*}
for all $x, x^{\prime} \in \mathbb{R}, y \in \mathbb{R}^{n}$ and $z \in \mathbb{R}^{n \times d}$ such that $|z| \leq M:=8 K^{2} \sqrt{d n}$ and
\begin{equation*}
\left|f(t,x, y, z)-f\left(t,x, y^{\prime}, z^{\prime}\right)\right| \leq K\left|y-y^{\prime}\right|+\rho\left(|z| \vee\left|z^{\prime}\right|\right)\left|z-z^{\prime}\right|
\end{equation*}
for all $x \in \mathbb{R}, y, y^{\prime} \in \mathbb{R}^{n}$ and $z, z^{\prime} \in \mathbb{R}^{n \times d}$.
\item[(iv)] It holds that
\begin{equation}
\begin{aligned}
 &\left|f(t,x, y, z)-f\left(t,x^{\prime}, y, z\right)-f\left(t,x, y^{\prime}, z^{\prime}\right)+f\left(t,x^{\prime}, y^{\prime}, z^{\prime}\right)\right| \\
&\quad \leq K\left|x-x^{\prime}\right|\left(\left|y-y^{\prime}\right|+\left|z-z^{\prime}\right|\right)
\end{aligned}
\end{equation}
for all $t\in[0,T],x,x^{\prime}\in\mathbb{R},y,y^{\prime}\in \mathbb{R}^{n}$ and $z,z^{\prime} \in\mathbb{R}^{n\times d}$.
\end{itemize}
\end{itemize}
\begin{remark}
As stated in \cite{kupper2019}, the condition (A3)(iv) is the minimal condition needed to ensure Lipschitz continuity in $y, z$ of the Malliavin derivative of $f\left(t,X_t, y, z\right)$ for a given SDE solution $X$, see e.g. \cite{el1997backward,cheridito2014bsdes} for details. When the generator $f$ is of the form $f(t,x, y, z):=f^1(t,x)+f^2(t,y)+f^3(t,z)$ for some functions $f^1, f^2$ and $f^3$, then (A3)(iv) is satisfied.
\end{remark}
\indent  To guarantee the global solvability of FBSDE \eqref{FBSDE}, we further impose the following monotonicity conditions.
\begin{itemize}
\item[\textbf{(M1)}] For $1\leq i,j \leq n$, $t\in[0,T] $ and $(x,y,z), (x,\bar{y},z)\in \mathbb{R}\times \mathbb{R}^{n} \times \mathbb{R}^{n\times d}$ such that $y^{i}=\bar{y}^{i}$ and $ y^j\leq \bar{y}^j$ for $j\neq i$, then we have
 \begin{equation*}
    f^{i}(t,x,y,z^{i})\le f^{i}(t,x,\bar{y},z^{i}).
\end{equation*}
\item[\textbf{(M2)}]  For $t\in[0,T]$, one of the following two conditions is satisfied:\\
    (i)
   For $(x,y),(x,\bar{y})\in \mathbb{R}\times \mathbb{R}^{n}$, such that $y\leq \bar{y}$, we have \begin{equation*}
    b(t,x,y) \geq b(t,x,\bar{y}).
\end{equation*}
And for $1 \leq i \leq n$, $(x,y,z), (\bar{x},y,z)\in \mathbb{R}\times \mathbb{R}^{n} \times \mathbb{R}^{n\times d}$, such that $x\leq \bar{x}$,  we have
\begin{equation}
  f^{i}(t,x,y,z^{i}) \leq f^{i}(t,\bar{x},y,z^{i}), \quad h^{i}(x)\leq h^{i}(\bar{x}). 
\end{equation}\\
(ii) For $(x,y),(x,\bar{y})\in \mathbb{R}\times \mathbb{R}^{n}$, such that $y\leq \bar{y}$, we have \begin{equation*}
    b(t,x,y) \leq b(t,x,\bar{y}).
\end{equation*}
And for $1 \leq i \leq n$, $(x,y,z), (\bar{x},y,z)\in \mathbb{R}\times \mathbb{R}^{n} \times \mathbb{R}^{n\times d}$, such that $x\leq \bar{x}$,  we have
\begin{equation}
  f^{i}(t,x,y,z^{i}) \geq f^{i}(t,\bar{x},y,z^{i}), \quad h^{i}(x)\geq h^{i}(\bar{x}). 
\end{equation}
\end{itemize}
\begin{remark}
The assumption (M1) states that  $f$ is quasi-monotonicity function, which often appears in multidimensional comparison theorem of BSDEs (see \cite{wu2009comparison}). 
\end{remark}
\indent Now we introduce some notations used in this section. For ease of notations, for $y_{1},y_{2}\in \mathbb{R}^{n}$, we denote
\begin{equation*}
   (y_{1}^{(1,i)},y_{2}^{(i+1,n)}):=(y_{1}^{1},y_{1}^{2},\cdots,y_{1}^{i},y_{2}^{i+1},\cdots,y_{2}^{n}). 
\end{equation*}
For $\left(x_{1}, y_{1}, z_{1}\right),\left(x_{2}, y_{2}, z_{2}\right)\in\mathbb{R}\times\mathbb{R}^n\times\mathbb{R}^{n\times d}$ and for $i,j=1,2,\cdots,n$, $k=1,2,\cdots,d$, let $\theta_1:=\left(x_{1}, y_{1}, z_{1}\right),\theta_2:=\left(x_{2}, y_{2}, z_{2}\right)$ and denote
\begin{equation}
    \begin{aligned}
    h_{1}^{i}(x_{1},x_{2}) &\triangleq \frac{h^{i}(x_{1})-h^{i}(x_{2})}{x_{1}-x_{2}},\\
    b_{1}(t,\theta_{1},\theta_{2})&\triangleq \frac{b(t,x_{1},y_{1})-b(t,x_{2},y_{1})}{x_{1}-x_{2}},\\ 
    f_{1}^{i}(t,\theta_{1},\theta_{2}) &\triangleq \frac{f^{i}(t,x_{1},y)-f^{i}(t,x_{2},y)}{x_{1}-x_{2}},\\
    b_{2}^{j}(t,\theta_{1},\theta_{2}) &\triangleq \frac{b(t,x_{2},z_{1},y_{2}^{(1,j-1)},y_{1}^{(j,n)})-b(t,x_{2},z_{1},y_{2}^{(1,j)},y_{1}^{(j+1,n)})}{y_{1}^{j}-y_{2}^{j}},\\
   f_{2}^{ij}(t,\theta_{1},\theta_{2})&\triangleq
    \frac{f^{i}(t,x_{2},z_{1},y_{2}^{(1,j-1)},y_{1}^{(j,n)})-f^{i}(t,x_{2},z_{1},y_{2}^{(1,j)},y_{1}^{(j+1,n)})}{y_{1}^{j}-y_{2}^{j}},\\
    f_{3}^{ik}(t,\theta_{1},\theta_{2})&\triangleq
    \frac{f^{i}(t,x_{2},y_{2},z_{2}^{i(1,k-1)},z_{1}^{i(k,d)})-f^{i}(t,x_{2},y_{2},z_{2}^{i(1,k)},z_{1}^{i(k+1,d)})}{z_{1}^{ij}-z_{2}^{ij}},
    \end{aligned} \label{quadratic notation}
\end{equation}
and $b_{2}(t,\theta_{1},\theta_{2}) = (b_{2}^{1},b_{2}^{2},\cdots,b_{2}^{n})(t,\theta_{1},\theta_{2})$, $f_{2}^{i}(t,\theta_{1},\theta_{2}) = (f_{2}^{i1},f_{2}^{i2},\cdots,f_{2}^{in})(t,\theta_{1},\theta_{2})$, $f_{3}^{i}(t,\theta_{1},\theta_{2}) = (f_{3}^{i1},f_{3}^{i2},\cdots,f_{3}^{id})(t,\theta_{1},\theta_{2})$. 

With the above introduced notations, it is obvious that the conditions (M1) indicates that for $1\leq i,j \leq n$, $f_{2}^{ij}\ge 0$, $ j \neq i$ and the condition (M2) indicates the non-positivity or non-negativity of $b_{2},f^{i}_{1},h_{1}^{i}$.\\
\indent Before moving to our main results, we recall the following elementary result for ordinary differential equations, whose proof is given for completeness.
\begin{lemma}
Consider the following ordinary differential equation
\begin{equation}
    y_{t} = H + \int _{t}^{T} (Ay_{s}+Ky_{s}+B)ds, \label{ODE}
\end{equation}
where $H, B\in \mathbb{R}^n, A \in \mathbb{R}^{n\times n}$ and all the components of $H, B$ and $A$ are $K$. Then ODE \eqref{ODE} has a unique solution on $[0,T]$ satisfying $|y_t|\leq nK(T+1)e^{(n+1)KT}$. \label{lemma 3.5}
\end{lemma}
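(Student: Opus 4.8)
The plan is to read \eqref{ODE} componentwise, noting that since every entry of $A$, $H$ and $B$ equals $K$ and $|\bar y_s|$ must be understood componentwise (so that $A|\bar y_s|\in\mathbb R^n$ can be added to $B$), the $i$-th line becomes
\begin{equation*}
\bar y_t^i = K + \int_t^T \Big( K\sum_{j=1}^n |\bar y_s^j| + K|\bar y_s^i| + K\Big)\,ds .
\end{equation*}
For existence and uniqueness I would argue that the map $y\mapsto A|y|+K|y|+B$ is globally Lipschitz on $\mathbb R^n$, since the componentwise absolute value is $1$-Lipschitz and $A,K$ are constant. Viewing \eqref{ODE} as the terminal value problem $\dot{\bar y}_t=-(A|\bar y_t|+K|\bar y_t|+B)$ with $\bar y_T=H$ and reversing time, one gets a globally Lipschitz initial value problem, so the Cauchy–Lipschitz theorem yields a unique solution on all of $[0,T]$.

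For the lower bound, I would simply observe from the componentwise form that every term in the integrand is nonnegative and that $H^i=K\ge 0$; hence $\bar y_t^i\ge K\ge 0$ for all $t$ and $i$. In particular $\bar y_t^i\ge 0$, which lets me replace $|\bar y_s^i|$ by $\bar y_s^i$ in the subsequent estimate.

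For the upper bound I would collapse the coupling by setting $u_t:=\sum_{i=1}^n \bar y_t^i\ge 0$. Summing the componentwise equation over $i$ and using $|\bar y_s^i|=\bar y_s^i$ gives
\begin{equation*}
u_t = nK + \int_t^T \big((n+1)K\,u_s + nK\big)\,ds \le nK(1+T) + (n+1)K\int_t^T u_s\,ds .
\end{equation*}
Applying the backward Gronwall inequality with rate $(n+1)K$ then yields $u_t\le nK(1+T)e^{(n+1)K(T-t)}\le nK(T+1)e^{(n+1)KT}$, and since each $\bar y_t^i\ge 0$ we have $\bar y_t^i\le u_t$, which is exactly the claimed bound. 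I do not expect a genuine obstacle here: the computation is routine, and the only points requiring care are the correct componentwise reading of $A|\bar y_s|$ (so that the all-$K$ structure of $A$ makes the $n$ equations couple only through the scalar $u_t$) and the use of the backward rather than forward form of Gronwall's lemma.
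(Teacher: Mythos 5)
Your proposal is correct and follows essentially the same route as the paper: nonnegativity of the integrand gives $\bar y_t\ge 0$, and summing the components to $u_t=\sum_i\bar y_t^i$ reduces the system to a scalar inequality handled by the backward Gronwall lemma, yielding the bound $nK(T+1)e^{(n+1)KT}$. Your added details (the global Lipschitz justification for existence/uniqueness and the explicit componentwise reading of $A|\bar y|$) merely make explicit what the paper leaves to ``standard ODE theory.''
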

\begin{proof}
It follows from standard ODE theory that ODE \eqref{ODE} admits a unique solution satisfying $y_t\geq 0$ for all $t\in[0,T]$. 
Denote
\begin{equation*}
    \tilde{y}_{t} = \sum _{i=1}^{n}y_{t}^{i},
\end{equation*}
then we get
\begin{equation*}
   \tilde{y} _{t}=nK+\int_{t}^{T}\left[(nK+K)\tilde{y}_{s}+nK\right]ds,
\end{equation*}
and using Gronwall's inequality we obtain
\begin{equation*}
    \tilde{y}_{t} \leq nK(T+1)e^{(n+1)KT},
\end{equation*}
which indicates $|y_t|\leq nK(T+1)e^{(n+1)KT}$.
\end{proof}

\indent Our first result concerns the local existence and uniqueness of solution. Moreover, we can prove that the function $u$ defined by \eqref{ut definition} is uniformly Lipschitz continuous in its spatial variable.
\begin{Theorem}
Under assumptions (H), (M1), (M2) and if one of assumptions (A1)-(A3) holds, there exists a constant $\delta>0$, such that for any $t\in[T-\delta,T]$ and $x\in\mathbb{R}$, the following FBSDE
\begin{equation}
\left\{\begin{array}{l}
X^{t,x}_{s}=x+\int _{t}^{s} b\left(r, X^{t,x}_{r}, Y^{t,x}_{r}\right) d r+\int _{t}^{s} \sigma_{r} d W_{r}, \\
Y_{s}^{t,x,i}=h^{i}\left(X^{t,x}_{T}\right)+\int_{s}^{T} f^{i}\left(r, X^{t,x}_{r}, Y^{t,x}_{r}, Z_{r}^{t,x,i}\right) d r-\int_{s}^{T} Z_{r}^{t,x,i}d W_{r}, \quad i=1,2,\cdots,n,
\end{array}\right.\label{small FBSDE}
\end{equation}
has a unique solution on $[t,T]$. Further the random function $u$ defined on $\Omega\times[T-\delta,T]\times\mathbb{R}$ by 
\begin{equation}
    u(\omega,t,x) = Y_{t}^{t,x}(\omega),\label{ut definition}
\end{equation}
satisfies for any $t\in[T-\delta,T]$ and $s\in[t,T]$,
    \begin{equation}\label{decoupling field}
    Y_{s}^{t,x} = u(s,X^{t,x}_{s}), ~~P\text{-}a.s.
    \end{equation}
    and
    \begin{equation}
\left|\frac{u(t,x_{1})-u(t,x_{2})}{x_{1}-x_{2}}\right|\leq y_{t}\label{ut uniform}
    \end{equation}
   for all $x_1,x_2\in\mathbb{R}$ and $x_1\neq x_2$, where $y_{t}$ is the solution of ODE \eqref{ODE}.  \label{Theorem :small duration}
\end{Theorem}
\begin{proof}
   Under assumption (H), if assumption (A1) holds, it follows from \cite{ma1999book} that there exists $\delta_{1} $ only depending on $K$, such that for any $t\in[T-\delta,T]$ and $x\in\mathbb{R}$, FBSDE \eqref{small FBSDE} admits a unique solution $(X^{t,x},Y^{t,x},Z^{t,x}) \in \mathcal{S}^{2}(\mathbb{R}) \times \mathcal{S}^{2}(\mathbb{R}^{n}) \times \mathcal{H}^{2}\left(\mathbb{R}^{n \times d}\right)$; if assumption (A2) holds, it follows from  \cite{luo2017solvability} that there exists  $\delta_{2}$ only depending on $K$ and $\lambda$, such that whenever for any $t\in[T-\delta,T]$ and $x\in\mathbb{R}$, FBSDE \eqref{small FBSDE} admits a unique solution $(X^{t,x},Y^{t,x},Z^{t,x}) \in \mathcal{S}^{2}(\mathbb{R}) \times \mathcal{S}^{\infty}(\mathbb{R}^{n}) \times \mathcal{H}^{2}\left(\mathbb{R}^{n \times d}\right)$  with $\|Z^{t,x} \cdot W\|_{B M O} \leq C $, for some constant $C$; if assumption (A3) holds, it follows from  \cite{kupper2019} that there exists  $\delta_{3} $ only depending on $n,K,d$, such that $t\in[T-\delta,T]$ and $x\in\mathbb{R}$, FBSDE \eqref{small FBSDE} admits a unique solution $(X^{t,x},Y^{t,x},Z^{t,x}) \in \mathcal{S}^2\left(\mathbb{R}\right) \times \mathcal{S}^2\left(\mathbb{R}^n\right) \times \mathcal{S}^{\infty}\left(\mathbb{R}^{n \times d}\right) $ with $|Z^{t,x}|\leq M$, for some constant $M$, which indicates the super-quadratic case can be included in Lipschitz case and analyzed together in the following proof process.
   
     Next we let $\delta = \operatorname{min} \{\delta_{1},\delta_{2},\delta_{3}\}$. For any $t\in[T-\delta,T]$ and any $ x_1,x_2 \in \mathbb{R}$ satisfying $x_{1} \neq x_{2}$, we denote $\Theta^{i}=(X^{t,x_i},Y^{t,x_i},Z^{t,x_i})$ for $i=1,2$ and $s\in[t,T]$
\begin{equation}
\nabla X_{s} \triangleq \frac{X^{t,x_1}_{s}-X^{t,x_2}_{s}}{x_{1}-x_{2}},\quad \nabla Y_{s} \triangleq \frac{Y^{t,x_1}_{s}-Y^{t,x_2}_{s}}{x_{1}-x_{2}}, \quad \nabla Z_{s} \triangleq \frac{Z^{t,x_1}_{s}-Z^{t,x_2}_{s}}{x_{1}-x_{2}}.\label{theta}
\end{equation}

One can check easily that $(\nabla X, \nabla Y, \nabla Z)$ satisfies the following  “variational FBSDE” on $[t,T]$
\begin{equation}\label{variation FBSDE}
\left\{\begin{array}{rlr}
\nabla X_{s}= & 1+\int_{t}^{s}\left(b_{1}(r) \nabla X_{r}+b_{2}(r)\nabla Y_{r}\right) dr, \\
\nabla Y_{s}^{i}= & h_{1}^{i}\nabla X_{T}+\int_{s}^{T}\left(f_{1}^{i}(r) \nabla X_{r}+f_{2}^{i}(r)\nabla Y_{r}+f_{3}^{i}(r)(\nabla Z_{r}^{i})^{\mathsf{T}}\right) dr-\int_{s}^{T} \nabla Z_{r}^{i} d W_{r},\quad i=1,2,\cdots,n,
\end{array}\right.
\end{equation}
where $h_{1}^{i} \triangleq h_{1}^{i}(X_{T}^{t,x_1},X_{T}^{t,x_2})$, $ b_{j}(r)\triangleq b_{j}(r,\Theta^{1}_{r},\Theta^{2}_{r})$, $j=1,2$, $f_{j}^{i}(r)\triangleq f_{j}^{i}(r,\Theta^{1}_{r},\Theta^{2}_{r})$, $j=1,2,3$, respectively. From now on, we might suppress time variable in case no confusion occurs. We note here that $b_{1},b_{2},f_{1},f_{2},f_{3}$ are adapted processes and $h_{1}$ is an $\mathcal{F}_{T}$-measurable random variable. Moreover, by choosing a smaller $\delta$ if necessary\footnote{The dependence of $\delta$ on the constants remains the same as the above. }, we have 
\begin{equation}
   |\nabla Y_{s}|\leq C|\nabla X_{s}|, \forall s\in [t,T], P\text{-}a.s.\label{tilde Y bound}
\end{equation}
for some positive constant. Under assumption (H) and (A1) or (A3), \eqref{tilde Y bound} is directly from standard arguments, see, for example, \cite[Theorem I.5.1]{ma1999book}. And under assumptions (H) and (A2), since  $f_{3}^{i}\cdot W$ is a BMO martingale, we can use Girsanov transformation and similar argument with Lipschitz case and get \eqref{tilde Y bound}.

  Now for any $t\in[T-\delta,T]$ and $x\in\mathbb{R}$, we could define a random field $u(\omega,t, x) \triangleq Y_{t}^{t, x}(\omega)$.
 In particular, from \eqref{tilde Y bound} and following similar argument as in \cite[Corollary 1.5]{delarue2002existence}, it can be shown that
    \begin{equation}
        Y_{s}^{t, x}=u\left(s, X_{s}^{t, x}\right), \text{ for all }s \in[t, T], P\text{-}a.s.\label{U}
    \end{equation}

Denote
\begin{equation}
     \nabla u_{t} \triangleq \frac{u(t,x_{1})-u(t,x_{2})}{x_{1}-x_{2}}.\label{nable u}
\end{equation}
In particular, we have 
\begin{equation}
 \nabla Y_{t} = \nabla u_{t}\nabla X_{t}. \label{ut}
\end{equation}

We now show that $\nabla X_{s}$ remains positive on the whole interval $[t,T]$. To this end, let $\tau \in [t,T]$ be a stopping time such that $\nabla X_{s}$ is positive on $[t,\tau)$. 
 Denote
\begin{equation*}
\tilde{Y}_{s}=\nabla Y_{s}\left[\nabla X_{s}\right]^{-1};\quad s \in[t, \tau).
\end{equation*}
Note that on $[t,\tau)$, we have 
\begin{equation*}
    d \nabla X_{s}^{-1} = -\nabla X_{s}^{-1} \left[(b_{1}+b_{2}\tilde{Y}_{s})\right]ds,
\end{equation*}
then we get
\begin{equation*}
    \nabla X_{s}^{-1} = \exp\left(\int _{t} ^{s}(-b_{1}-b_{2}\tilde{Y}_{r})dr\right).
\end{equation*}

Moreover, the uniform boundedness of $\tilde{Y}$ implied from \eqref{tilde Y bound} and the Lipschitz continuity of $b$ imply $\nabla X^{-1}$ is bounded on $\left[t, \tau\right)$ by a constant that does not depend on $\tau$. This implies that $\nabla X $ can never reach 0 and therefore, we can choose $\tau=T$.\\
Since $\nabla X_{s}$ stays positive on $[t,T]$, we get from \eqref{ut} that 
\begin{equation}
    \nabla u_{t} = \nabla Y_{t}\nabla X_{t}^{-1}=\tilde{Y}_{t}.
\end{equation}
Thus \eqref{ut uniform} is equivalent to $|\tilde{Y}_{t}|\leq y_{t}$. Indeed, we can prove $| \tilde{Y}_{s}|\leq y_{s}$ for all $s\in[t,T]$.\\
Applying Ito's formula to $\nabla Y_{s}[\nabla X_{s}]^{-1}$ on $s\in[t,T]$, it holds
\begin{equation}
   \begin{aligned}
d\tilde{Y}_{s}^{i} &=\nabla Y_{s}^{i} d \nabla X_{s}^{-1}+\nabla X_{s}^{-1} d \nabla Y_{s}^{i} +d\left\langle \nabla X_{s}^{-1}, \nabla Y_{s}^{i}\right\rangle\\
&=\left(-b_{1}\tilde{Y}_{s}^{i}-(b_{2}\tilde{Y}_{s})\tilde{Y}^{i}_{s}\right) ds+\left(-f_{1}^{i}-f_{2}^{i} \tilde{Y}_{s}-\nabla X_{s}^{-1} (f_{3}^{i} (\nabla Z_{s}^{i})^{\mathsf{T}})\right) d s\\
& \quad +\nabla X_{s}^{-1}\nabla Z_{s}^{i} d W_{s}
\\
& = \left(-b_{1}\tilde{Y}_{s}^{i}-(b_{2}\tilde{Y}_{s})\tilde{Y}^{i}_{s}-f_{1}^{i}-f_{2}^{i}\tilde{Y}_{s}- f_{3}^{i}(\tilde{Z}_{s}^{i})^{\mathsf{T}}\right) ds+ \tilde{Z}_{s}^{i}d W_{s}
,\quad i = 1,2,\cdots,n,
\end{aligned} \label{compare BSDE}
\end{equation}
with $\tilde{Y}_{T} = h_{1}$.\\
    Without loss of any generality, it is sufficient to prove the argument for the case where assumptions (M1) and (M2)(i) hold. Now we introduce two n-dimensional BSDEs:
\begin{equation}
\begin{aligned}
\hat{Y}_{s}^{i} &= \int_{s}^{T}\hat{H}^{i}(r,\hat{Y}_{r},\hat{Z}_{r})dr-\int_{s}^{T}\hat{Z}_{r}^{i}dW_{r}\\
&=\int_{s}^{T}\left(\sum _{j=1}^{n}f_{2}^{ij}\hat{Y}_{r}^{j} +b_{1}\hat{Y}_{r}^{i} + \sum_{j=1}^{n}b_{2}^{j}\tilde{Y}_{r}^{j}\hat{Y}_{r}^{i}+ f_{3}^{i}(\hat{Z}_{r}^{i})^{\mathsf{T}}\right)dr-\int_{s}^{T}\hat{Z}_{r}^{i}dW_{r}
    ,\quad i=1,2,\cdots,n,
\end{aligned}\label{under BSDE}
\end{equation}
and
\begin{equation}
\begin{aligned}
\bar{Y}_{s}^{i} &=K + \int_{s}^{T}\bar{H}^{i}(r,\bar{Y}_{r},\bar{Z}_{r})dr-\int_{r}^{T}\bar{Z}^{i}_{r}dW_{r}\\
&=K+\int_{s}^{T}
    \left(K+ \sum_{j=1}^{n}K|\bar{Y}_{r}^{j}|+K|\bar{Y}_{r}^{i}|+f_{3}^{i }(\bar{Z}_{r}^{i})^{\mathsf{T}}\right)ds-\int_{s}^{T}\bar{Z}^{i}
    _{r}dW_{r}, \quad i=1,2,\cdots,n.\label{upper BSDE}
\end{aligned}
\end{equation}\\
It is easy to be verified that $(0,0)$ is the solution of the BSDE \eqref{under BSDE}, and $(y_{s}, 0)$ is the solution of BSDE \eqref{upper BSDE}, where $y_{s}$ is the solution of ODE \eqref{ODE} as in Lemma \ref{lemma 3.5}.\\
We are now going to show $0=\hat{Y}_{s} \le \tilde{Y}_{s} \le \bar{Y}_{s}=y_{s}$, under assumption (M1) and (M2)(i) for $s\in[t,T]$.
 The key tool is comparison theorem for multidimensional BSDEs (see \cite{hu2006comparison} for the Lipschitz case and \cite{luo2021comparison} for the quadratic case).\\
 The generator of  BSDE \eqref{compare BSDE} can be represented as the following form:
 \begin{equation}
   H^{i}(s,y,z)=f_{1}^{i}+\sum_{j=1}^{n} f_{2}^{i j} y^{j}+b_{1} y^{i}+\sum_{j=1}^{n} b_{2}^{j} \tilde{Y}_{s}^{j} y^{i}+f_{3}^{i}(z^{i})^{\mathsf{T}}, \quad i=1,2,\cdots,n. 
 \end{equation}
 Under assumptions (H),(A1) (or (A3)), $\tilde{Y}$, $b_{1}^{i},b_{2}^{i},f_{1}^{i},f_{2}^{i},f_{3}^{i}$ are bounded. Moreover under the conditions (M1) and (M2)(i), we get that $\hat{H}^{i}(s,y,z)\leq H^{i}(s,\bar{y},z)$ when $y^{i} = \bar{y}^{i}, y^{j} \le \bar{y}^{j},j\neq i$ and $h_{1}^{i} \ge 0$. By applying the comparison theorem \cite[Theorem 2.2]{hu2006comparison}, we get $0=\hat{Y}_{s}\le \tilde{Y}_{s}$, for all $s\in[t,T]$. As for under assumptions (H) and (A2), since $f_{3}^{i}\cdot W$ is a BMO martingale, by a slight modification of the proof of \cite[Theorem 2.3]{luo2021comparison}, we can also get $0=\hat{Y}_{s}\le \tilde{Y}_{s}$, for all $s\in[t,T]$. \\
In order to get the upper bound of $\tilde{Y}$, we represent the generator in another form:
\begin{equation*}
   H^{i\prime}(s,y,z)=f_{1}^{i}+\sum_{j=1}^{n} f_{2}^{i j} y^{j}+b_{1} y^{i}+\sum_{j=1}^{n} b_{2}^{j} \tilde{Y}_{s}^{j} \tilde{Y}_{s}^{i}+f_{3}^{i}(z^{i})^{\mathsf{T}}, \quad i=1,2,\cdots,n.  
\end{equation*}
Noting the non-negativity of $\tilde{Y}$ and $b_{2}^{j} \le 0$ under condition (M2)(i), combining with condition (M1), for $y,\bar{y}$ satisfying $y^i=\bar{y}^i, y^j \leq \bar{y}^j, j \neq i$, we have
\begin{equation}
\begin{aligned}
H^{i \prime}(s, y, z) & \leq f_1^i+\sum_{j=1}^n f_2^{i j} \bar{y}^j+b_1 \bar{y}^i+f_3^i(z^i)^{\mathsf{T}} \\
& \leq K+\sum_{j=1}^n K\left|\bar{y}^j\right|+K\left|\bar{y}^i\right|+f_3^i(z^i)^{\mathsf{T}} \\
& =\bar{H}^i(s, \bar{y}, z), \quad i=1,2, \cdots, n.
\end{aligned} 
\end{equation}
 Combined with the terminal condition that $h_1^i \leq K$, using similar argument as above, we obtain $\tilde{Y}_s \leq \bar{Y}_s=y_{s}$, for all $s \in[t, T]$. For the case where assumptions (M1) and (M2)(ii) hold, we can similarly prove 
 $-y_{t}\leq\tilde{Y}_{t}\leq 0$, which concludes the proof.
\end{proof}
\begin{remark}
We should point out that for the quadratic case, the time duration $\delta$ depends on  $K,\lambda $, where $\lambda $ is the bound of the terminal condition. Therefore, in addition to  obtaining the uniform Lipschitz constant of the decoupling field, it is critical to obtain the uniform bound of the value process $Y$ in order to obtain global existence, which will be tackled rigorously in Theorem \ref{Theorem:global}.
\end{remark}
\indent We are now ready to state our second main result, which gives the existence and uniqueness of solution for FBSDE \eqref{FBSDE} for arbitrarily large $T$.
\begin{Theorem}
Under assumptions (H), (M1),(M2) and if one of assumptions (A1)-(A3) holds, for any $t\in[0,T]$ and $x\in\mathbb{R}$, the FBSDE \eqref{small FBSDE} admits a unique solution. Further the random function defined on $\Omega \times [0,T]\times \mathbb{R}$ by
\begin{equation*}
    u(\omega,t,x) = Y_{t}^{t,x}(\omega)
\end{equation*}
satisfies for any $t\in[0,T]$ and $s\in[t,T]$,
\begin{equation*}
    Y_{s}^{t,x} = u(s,X^{t,x}_{s}), P\text{-}a.s.
\end{equation*}
and
    \begin{equation*}
\left|\frac{u(t,x_{1})-u(t,x_{2})}{x_{1}-x_{2}}\right|\leq C,
    \end{equation*}
for some constant $C$ only depending on $n,K,T$ and for all $x_1,x_2\in\mathbb{R}$ and $x_1\neq x_2$. In particular, FBSDE \eqref{FBSDE} admits a unique solution.
\label{Theorem:global}
\end{Theorem}
\begin{proof} 
First, applying lemma \ref{lemma 3.5}, there exists a constant $C$, which depends on $n,K,T$ such that
\begin{equation}
     |y_{s}| \leq C \label{BOUND}
\end{equation}
for all $0 \leq s \leq T$.\\
\indent Then, let's consider Lipschitz and super-quadratic cases. For any $t\in[0,T]$, let $\delta>0$ be the constant determined by $C$ and $t=t_{0}<$ $\cdots<t_{m}=T$ be a partition of $[t, T]$ such that $t_{i}-t_{i-1} \leq \delta$, $i=1, \cdots, m$. We first consider FBSDE \eqref{small FBSDE} on $\left[t_{m-1}, T\right]$. Since $ 0 \leq h_{1} \leq y_{T}$, we see that the Lipschitz constant of the terminal condition $h$ is less than  $C$, then by Theorem \ref{Theorem :small duration}, there exists a function $ u(t_{m-1},\cdot)$ which satisfies 
\begin{equation}\label{eq:Lip_u}
    \left|\frac{u(t_{m-1},x_{1})-u(t_{m-1},x_{2}) }{x_{1}-x_{2}}\right|\leq |y_{t_{m-1}}|\leq C.
      \end{equation}
 Repeating this procedure backwardly finitely many times, we can find $u(t_{i},\cdot)$ satisfying 
\begin{equation}
\left|\frac{u(t_{i},x_{1})-u(t_{i},x_{2}) }{x_{1}-x_{2}}\right|\leq |y_{t_{i}}|\leq C, \quad i=0,1,2,\cdots,m.\label{n ut}
  \end{equation}
\indent As for quadratic case, by constructing a ODE similar with the one in \cite[Theorem 2.3]{hu2016}, there exists a constant $\tilde{\lambda}$ which depends only on $K,n,T,\lambda$ such that $|h(T,\cdot)|\le \tilde{\lambda}$. Let $\delta>0$ be the constant determined by $C$ and $\tilde{\lambda}$,  and $t=t_{0}<$ $\cdots<t_{m}=T$ be a partition of $[t, T]$ such that $t_{i}-t_{i-1} \leq \delta$, $i=1, \cdots, m$, then we can find a function $u(t_{m-1},\cdot)$  satisfies \eqref{eq:Lip_u} following the same argument with Lipchitz and super-quadratic cases. Using the similar argument with \cite[Theorem 2.3]{hu2016}, we can prove $|u(t_{m-1},\cdot)|\le \tilde{\lambda}$. Repeating the preceding process, we can find $u(t_{i},\cdot)$ satisfying \eqref{n ut}.

Recalling \eqref{n ut}, it follows from Theorem \ref{Theorem :small duration} that FBSDE \eqref{small FBSDE}  admits a unique solution on $[t,t_1]$ with initial condition $x$ and terminal function $u(t_{1},\cdot)$. Recursively, for $i=1,2,3,\cdots,m-1$,  FBSDE \eqref{small FBSDE}  admits a unique solution on $[t_i,t_{i+1}]$ with initial condition $X_{t_i}$ and terminal function $u(t_{i+1},\cdot)$. The decoupling field property \eqref{decoupling field} ensures the small duration solutions can be patched together. Thus we get the existence of a solution for FBSDE \eqref{small FBSDE} on $[t,T]$. The uniqueness follows immediately from the uniquness on each interval. In particular, the well-posedness of FBSDE \eqref{FBSDE} is obtained with $t=0$. The proof is complete.
\end{proof}
\begin{remark}
Antonelli and Hamad{\`e}ne \cite{antonelli2006existence} prove the global existence of solution for FBSDE \eqref{FBSDE}  under the monotonicity condition that the coefficients $b$ is increasing in $y$ and $f$ is increasing $x$. Recently, this approach is further extended by Chen and Luo \cite{chen2021existence} to study multi-dimensional coupled FBSDEs with diagonally quadratic generators . In both papers, they fail to ensure the uniqueness of the solution. Compared with them, we can establish the uniqueness of solution for FBSDE \eqref{FBSDE} under the opposite monotonicity of $b$ in $y$ and $f$ in $x$. 
\end{remark}
\begin{remark}
Luo and Tangpi \cite{luo2017solvability} obtain local solvability for diagonally quadratic FBSDEs. In Markovian setting, Kupper, Luo and Tangpi \cite{kupper2019} give global solvability for super-quadratic FBSDEs, while Jackson \cite{jackson2021global} considers the global solvability for quadratic FBSDEs. Compared with \cite{luo2017solvability}, we obtain global solution. Compared with \cite{jackson2021global} and \cite{kupper2019}, our results do not require the non-degeneracy of the diffusion process.
\label{quadratic remark}
\end{remark}
\begin{remark}
Compared with \cite{peng1999}, we propose a different monotonicity condition, which allows us to deal with some FBSDEs which can not be solved by Peng and Wu \cite{peng1999}. Let us illustrate it with the following example.
\begin{equation}
\begin{aligned}
X_{t}&=x_{0}+\int_{0}^{t}(-Y_{s})ds+\int_{0}^{t}\sigma_{s}dW_{s},\\
Y_{t}&=X_{T}+\int_{t}^{T}(X_{s}-Y_{s}-Z_{s})ds-\int_{t}^{T}Z_{s}dW_{s},
\end{aligned}\label{example}
\end{equation}
where $X\in\mathbb{R}, Y\in\mathbb{R}, Z\in\mathbb{R}$. It is obvious that above FBSDE satisfies assumptions (H),(A1),(M1) and (M2). According to Theorem \ref{Theorem:global} , FBSDE \eqref{example} admits a unique solution, whereas the monotonicity assumption of (H2.3) in \cite{peng1999} fails. Indeed the monotonicity for $\Phi$ still holds true,
\begin{equation*}
    \langle \Phi(x)-\Phi(\bar{x}),x-\bar{x}\rangle = (x-\bar{x})^{2}\ge 0,
\end{equation*}
but we have 
\begin{equation*}
    \langle A(t,u)-A(t,\bar{u}),u-\bar{u}\rangle =G[ -(x-\bar{x})^{2}-(y-\bar{y})^{2}+(x-\bar{x})(y-\bar{y})+(x-\bar{x})(z-\bar{z})],
\end{equation*}
where $A(t,u)= \left(\begin{array}{c}
-G^T f \\
G b \\
G \sigma
\end{array}\right)(t, u),u=(x,y,z),\bar{u}=(\bar{x},\bar{y},\bar{z})$.\\
\indent In general, we can not find a $G$ such that monotonicity assumption holds since the existence of the intersection of $x$ and $z$. \label{continuation remark}
\end{remark}
\section{Some extensions in Lipschitz case}
\subsection{A more general Lipschitz FBSDE}
In this subsection, we provide some extensions of our global solvability results. Indeed, by restricting to Lipschitz generators, our approach still works to solve  a more general FBSDE \eqref{FBSDE5} globally, where the forward diffusion is allowed to depend on $X$ and $Y$, that is we consider the following FBSDE  
\begin{equation}
\left\{\begin{array}{l}
X_t=x+\int_0^t b\left(s, X_s, Y_s\right) d s+\int_0^t \sigma\left(s, X_s, Y_s\right) d W_s, \\
Y_t^i=h^i\left(X_T\right)+\int_t^T f^i\left(s, X_s, Y_s, Z_s^i\right) d s-\int_t^T Z_s^i d W_s, i=1,2, \cdots, n,
\end{array}\right.\label{FBSDE5}
\end{equation}
where $X \in \mathbb{R}, Y \in \mathbb{R}^n, Z \in \mathbb{R}^{n \times d}$.\\
\indent In this section, given a positive constant $K$,  we make the following assumptions:
\vspace{0.8em}
\begin{itemize}
    \item [\textbf{(B1)}]
    \begin{itemize}
        \item [(i)]  $b: \Omega \times[0, T] \times \mathbb{R} \times \mathbb{R}^{n} \rightarrow \mathbb{R},f:\Omega \times [0, T] \times \mathbb{R} \times \mathbb{R}^{n} \times \mathbb{R}^{n \times d} \rightarrow \mathbb{R}^{n}$,  $\sigma: \Omega \times [0,T] \times \mathbb{R} \times \mathbb{R}^{n} \rightarrow \mathbb{R}^{d}$ are progressive measurable, $h: \Omega \times \mathbb{R} \rightarrow \mathbb{R}^{n}$ is $\mathcal{F}_{T}$-measurable, $f^{i}(t,x,y,z) = f^{i}(t,x,y,z^{i})$ for $i=1,2,\cdots,n$, and 
\begin{equation*}
\begin{aligned}
\left|b(t,x, y)-b\left(t,x^{\prime}, y^{\prime}\right)\right| & \leq K(\left|x-x^{\prime}\right|+\left|y-y^{\prime}\right|),\\
\left|f(t,x, y, z)-f\left(t,x^{\prime}, y^{\prime}, z^{\prime}\right)\right| & \leq K(\left|x-x^{\prime}\right|+|y-y^{\prime}|+|z-z^{\prime}|),\\
\left|h(x)-h\left(x^{\prime}\right)\right| &\leq K\left|x-x^{\prime}\right|,\\
  |\sigma(t,x,y)-\sigma(t,x^{\prime},y^{\prime})| &\leq K(|x-x^{\prime}|+|y-y^{\prime}|)\\
 \end{aligned}
\end{equation*}
for all $x, x^{\prime} \in \mathbb{R}, y,y^{\prime} \in \mathbb{R}^{n}$ and $z,z^{\prime} \in \mathbb{R}^{n \times d}$.
   \item[(ii)]
   The following integrability condition holds
   \begin{equation*}
 \mathbb{E}\left\{\left(\int_0^T[|b(t,0,0)|+|f(t,0,0,0)|]dt\right)^{2} +\int_{0}^{T} |\sigma(t, 0,0)|^{2}d t+|h(0)|^2\right\}<\infty.     
 \end{equation*}
\end{itemize}
\end{itemize}
\indent Next we give some notations used in this section. When $b,f,h,\sigma$ satisfy assumption (B1), for $\left(x_{1}, y_{1}, z_{1}\right),\left(x_{2}, y_{2}, z_{2}\right)\in\mathbb{R}\times\mathbb{R}^n\times\mathbb{R}^{n\times d}$, let $\theta_1:=\left(x_{1}, y_{1}, z_{1}\right),\theta_2:=\left(x_{2}, y_{2}, z_{2}\right)$ , $b_{j}(t,\theta_{1},\theta_{2})$, $j=1,2$, $f_{j}(t,\theta_{1},\theta_{2})$, $j=1,2,3$ and $h_{1}(x_{1},x_{2})$ are defined by \eqref{quadratic notation}. For $i=1,2 \cdots,d$, $j=1,2,\cdots,n$,
\begin{equation}
\begin{aligned}
       \sigma_{1}^{i}(t,\theta_{1},\theta_{2}) &\triangleq \frac{\sigma^{i}(t,x_{1},y_{1})-\sigma^{i}(t,x_{2},y_{1})}{x_{1}-x_{2}},\\
       \sigma_{2}^{ij}(t,\theta_{1},\theta_{2}) &\triangleq\frac{\sigma^{i}(t,x_{2},y_{2}^{(1,j-1)},y_{1}^{(j,n)})-\sigma^{i}(t,x_{2},y_{2}^{(1,j)},y_{1}^{(j+1,n)})}{y_{1}^{j}-y_{2}^{j}},
\end{aligned}\label{quadratic sigma}
\end{equation}
and  $\sigma_{2}(t,\theta_{1},\theta_{2}) = (\sigma_{2}^{1},\sigma_{2}^{2},\cdots,\sigma_{2}^{d})^{\mathsf{T}}(t,\theta_{1},\theta_{2})(t,\theta_{1},\theta_{2})$, $\sigma_{1}(t,\theta_{1},\theta_{2}) = (\sigma_{1}^{1},\sigma_{1}^{2},\cdots,\sigma_{1}^{d})^{\mathsf{T}}(t,\theta_{1},\theta_{2})$, where $\sigma_{2}^{i}(t,\theta_{1},\theta_{2}) = (\sigma_{2}^{i1},\sigma_{2}^{i2},\cdots,\sigma_{2}^{in})^{\mathsf{T}}(t,\theta_{1},\theta_{2})$.\\
Then we impose the following monotonicity conditions.
\begin{itemize}
\item[\textbf{(M3)}]
$b_{2}(t,\theta_{1},\theta_{2}),f_{3}^{i}(t,\theta_{1},\theta_{2}),\sigma_{2}(t,\theta_{1},\theta_{2})$ are defined by \eqref{quadratic notation} and \eqref{quadratic sigma}, for $t\in[0,T]$, one of the following two cases holds
    \begin{itemize}
        \item[(i)] 
    For $1 \leq i \leq n$, we have
\begin{equation}
  f^{i}(t,x,y,z^{i}) \leq f^{i}(t,\bar{x},y,z^{i}), \quad h^{i}(x)\leq h^{i}(\bar{x}),
\end{equation} 
 for $(x,y,z), (\bar{x},y,z)\in \mathbb{R}\times \mathbb{R}^{n} \times \mathbb{R}^{n\times d}$ satisfying $x\leq \bar{x}$,
and
\begin{equation*}
b_{2}(t,\theta_{1},\theta_{2})+f_{3}^{i}(t,\theta_{1},\theta_{2}) \sigma_{2}(t,\theta_{1},\theta_{2}) \leq 0,
\end{equation*}
for all $\theta_{1},\theta_{2}\in \mathbb{R} \times \mathbb{R}^{n}\times \mathbb{R}^{n\times d}$.
\item[(ii)] 
 For $1 \leq i \leq n$, we have
\begin{equation}
  f^{i}(x,y,z^{i}) \geq f^{i}(\bar{x},y,z^{i}), \quad h^{i}(x)\geq h^{i}(\bar{x}),
\end{equation} 
 for $(x,y,z), (\bar{x},y,z)\in \mathbb{R}\times \mathbb{R}^{n} \times \mathbb{R}^{n\times d}$ satisfying $x\leq \bar{x}$,
and
\begin{equation*}
b_{2}(t,\theta_{1},\theta_{2})+f_{3}^{i}(t,\theta_{1},\theta_{2}) \sigma_{2}(t,\theta_{1},\theta_{2}) \geq 0,
\end{equation*}
for all $\theta_{1},\theta_{2}\in \mathbb{R} \times \mathbb{R}^{n}\times \mathbb{R}^{n\times d}$.
\end{itemize}
\end{itemize}
\indent The following theorem ensures global existence and uniqueness of solution for FBSDE \eqref{FBSDE5}, which extends some results of Zhang
\cite{zhang2017wellposedness}. Indeed, the global solvability of FBSDE \eqref{FBSDE5} can be deduced from \cite{zhang2017wellposedness} by assuming $b_{2}=0 $. However, FBSDEs arising from optimal control problems ususally does not satisfy this condition. By using different argument, we are able to deal with this case by additionally imposing some monotonicity conditions.
\begin{Theorem}
Suppose assumptions (B1), (M1), (M3) hold, then for any $t\in[0,T]$ and $x\in\mathbb{R}$, the following FBSDE
\begin{equation}
\left\{\begin{array}{l}
X^{t,x}_{s}=x+\int _{t}^{s} b\left(r, X^{t,x}_{r}, Y^{t,x}_{r}\right) d r+\int _{t}^{s} \sigma(r,X_{r}^{t,x},Y_{r}^{t,x}) d W_{r}, \\
Y_{s}^{t,x,i}=h^{i}\left(X^{t,x}_{T}\right)+\int_{s}^{T} f^{i}\left(r, X^{t,x}_{r}, Y^{t,x}_{r}, Z_{r}^{t,x,i}\right) d r-\int_{s}^{T} Z_{r}^{t,x,i}d W_{r}, \quad i=1,2,\cdots,n,
\end{array}\right.\label{small lip FBSDE}
\end{equation}
has a unique solution $(X^{t,x},Y^{t,x},Z^{t,x})\in \mathcal{S}^{2}(\mathbb{R}) \times \mathcal{S}^{2}(\mathbb{R}^{n}) \times \mathcal{H}^{2}\left(\mathbb{R}^{n \times d}\right)$on $[t,T]$. Further the random function $u$ defined on $\Omega\times[0,T]\times\mathbb{R}$ by 
\begin{equation*}
    u(\omega,t,x) = Y_{t}^{t,x}(\omega),
\end{equation*}
satisfies for any $t\in[0,T]$ and $s\in[t,T]$,
\begin{equation*}
    Y_{s}^{t,x} = u(s,X^{t,x}_{s}), ~~P\text{-}a.s.
    \end{equation*}
    and
    \begin{equation*}
\left|\frac{u(t,x_{1})-u(t,x_{2})}{x_{1}-x_{2}}\right|\leq C
    \end{equation*}
   for some constant $C$ only depending on $n,K,T$ and for all $x_1,x_2\in\mathbb{R}$ and $x_1\neq x_2$. In particular, FBSDE \eqref{FBSDE5} has a unique solution in $\mathcal{S}^{2}(\mathbb{R}) \times \mathcal{S}^{2}(\mathbb{R}^{n}) \times \mathcal{H}^{2}\left(\mathbb{R}^{n \times d}\right)$.
\label{Lipschitz corollary}
\end{Theorem}
\begin{proof}
First, it follows from \cite{ma1999book} that there exists a constant $\delta$, which depends only on the Lipschitz constant $K$, such that for any $t\in[T-\delta,T]$ and $x\in\mathbb{R}$, the FBSDE \eqref{small lip FBSDE} has a unique solution $(X^{t,x},Y^{t,x},Z^{t,x}) \in \mathcal{S}^{2}(\mathbb{R}) \times \mathcal{S}^{2}\left(\mathbb{R}^{n}\right) \times \mathcal{H}^{2}\left(\mathbb{R}^{n\times d}\right)$.\\
 \indent Now for any $t\in[T-\delta,T]$ and any $x_{1},x_{2}\in \mathbb{R}$ satisfying $x_{1} \neq x_{2}$, we denote $\Theta^{i}=(X^{t,x_{i}},Y^{t,x_{i}},Z^{t,x_{i}})$ for $i=1,2$. Using the same notations with \eqref{theta}, it is easy to verify that $(\nabla X, \nabla Y, \nabla Z)$ satisfies\\
\begin{equation*}
\left\{\begin{array}{rlr}
\nabla X_{s}= & 1+\int_{t}^{s}\left(b_{1}(r) \nabla X_{r}+b_{2}(r)\nabla Y_{r}\right) d r+\int_{t}^{s}(\nabla X_{r}\sigma_{1}^{\mathsf{T}}(r)+\nabla Y_{r}^{\mathsf{T}}\sigma_{2}^{\mathsf{T}}(r))dW_{r}, \\
\nabla Y_{s}= & h_{1}\nabla X_{T}+\int_{s}^{T}\left(f_{1}(r) \nabla X_{r}+f_{2}(r)\nabla Y_{r}+f_{3}(r) \cdot \nabla Z_{r}\right) dr -\int_{s}^{T} \nabla Z_{r} d W_{r},
\end{array}\right.
\end{equation*}
where $h_{1}\triangleq h_{1}^{i}(X_{T}^{1},X_{T}^{2})$, $ \phi_{j}(r)\triangleq \phi_{j}(r,\Theta^{1}_{t},\Theta^{2}_{t})$, $\phi = b,f$, $j=1,2,3$ respectively, $\sigma_{j}(r) \triangleq \sigma_{j}(r,\Theta^{1}_{t},\Theta^{2}_{t})$, $j=1,2$ respectively. We note here that $b_{1}(r),b_{2}(r),f_{1}(r),f_{2}(r),f_{3}(r)$ are $\mathcal{F}_{r}$ adapted processes and $h_{1}$ is an $\mathcal{F}_{T}$-measurable random variable, and we will omit the time variable in the following. 

 Now, define a random field as $u(\omega,t,x)= Y^{t,x}_{t}(\omega)$.
Following similar argument in Theorem \ref{Theorem :small duration}, by choosing a smaller $\delta$ (only depending on $K$) if necessary, we have $|\nabla Y_{s}|\leq C|\nabla X_{s}|$ and further $Y^{t,x}_{s} = u(s,X^{t,x}_{s}), s\in[t,T],P$-a.s.

Now we prove $\nabla X_{s} >0$ for all $s \in [t,T]$.
If we let $\tau = T \land \inf \{s \ge t:\nabla X_{s}=0\}$.
To this end note that $\nabla X_{s}$ satisfies, on $[t,\tau)$, the linear SDE:
\begin{equation}
  d \nabla X_{s}=\left(b_{1} \nabla X_{s}+b_{2} \nabla Y_{s}\right) d s+\left(\nabla X_{s}\sigma_{1}^{\mathsf{T}}+\nabla Y_{s}^{\mathsf{T}}\sigma_{2}^{\mathsf{T}} \right) d W_{s}. \label{4.5}  
\end{equation}
Denote
\begin{equation*}
    \tilde{Y}_{s}= \nabla X_{s}^{-1}\nabla Y_{s},
\end{equation*}
we obtain
\begin{equation*}
    d \nabla X_{s}=\nabla X_{s}\left[\left(b_{1}+b_{2}\tilde{Y}\right) ds+\left(\sigma_{1}^{\mathsf{T}}+\tilde{Y}^{\mathsf{T}}\sigma_{2}^{\mathsf{T}}\right)dW_{s}\right],\quad s\in [t,\tau),
\end{equation*}
then 
\begin{equation}
 \nabla X_{s}=\exp \left[ \int_{t}^{s}\left(b_{1}+b_{2} \tilde{Y} \right) dr-\frac{1}{2}|\sigma_{1}^{\mathsf{T}}+\tilde{Y}^{\mathsf{T}}\sigma_{2}^{\mathsf{T}}|^{2}dr +\int_{t}^{s}\left(\sigma_{1}^{\mathsf{T}}+\tilde{Y}^{\mathsf{T}}\sigma_{2}^{\mathsf{T}} \right) d W_{r}\right], \quad s\in [t,\tau).  \end{equation}
Indeed, since $b_{1},b_{2},\sigma_{1},\sigma_{2},\tilde{Y}_{t}$ are all uniformly bounded, which implies that $\nabla X_{t}$ stays positive and then $\tau = T$.\\
Moreover we have
\begin{equation*}
\begin{aligned}
  d\nabla X_{s}^{-1} &=-\nabla X_{s}^{-2}d\nabla X_{s}+\nabla X ^{-3}_{s}d\langle\nabla X_{s}\rangle\\
  &=-\nabla X_{s}^{-2}\left[(b_{1}\nabla X_{s} + b_{2} \nabla Y_{s})dt + (\nabla X_{s}\sigma_{1}^{\mathsf{T}}+\nabla Y_{s}^{\mathsf{T}}\sigma_{2}^{\mathsf{T}})dW_{s}\right]\\
  &\quad +\nabla X_{s}^{-3}\left[|\nabla X_{s}\sigma_{1}^{\mathsf{T}}+\nabla Y_{s}^{\mathsf{T}}\sigma_{2}^{\mathsf{T}}|^{2}ds\right]\\
  &=-\nabla X_{s}^{-1}\left[(b_{1}+b_{2} \tilde{Y}_{s}-|\sigma_{1}^{\mathsf{T}}+ \tilde{Y}_{s}^{\mathsf{T}}\sigma_{2}^{\mathsf{T}}|^{2})ds+(\sigma_{1}^{\mathsf{T}}+ \tilde{Y}_{s}^{\mathsf{T}}\sigma_{2}^{\mathsf{T}})dW_{s}\right],\\
  \end{aligned}
\end{equation*}
and 
\begin{equation*}
    d\nabla Y_{s}^{i} = (-f_{1}^{i} \nabla X_{s}-f_{2}^{i}\nabla Y_{s}-f_{3}^{i}(\nabla Z_{s}^{i})^{\mathsf{T}})ds + \nabla Z_{s}^{i}dW_{s},\quad i=1,2\cdots,n.
\end{equation*}
The dynamics of $\tilde{Y}_{s}$ are now deduced from those of $\nabla Y_{s}$ and $[\nabla X_{s}]^{-1}$ using the product rule. For all $s \in[t, T]$, it holds
\begin{equation}
\begin{aligned}
d\tilde{Y}_{s}^{i}&=\nabla Y_{s}^{i} d \nabla X_{s}^{-1}+\nabla X_{s}^{-1} d \nabla Y_{s}^{i}+d\left\langle\nabla X_{s}^{-1}, \nabla Y_{s}\right\rangle \\ 
&=-\tilde{Y}_{s}^{i}\left[(b_{1}+b_{2} \tilde{Y}_{s}-|\sigma_{1}^{\mathsf{T}}+\tilde{Y}_{s}^{\mathsf{T}}\sigma_{2}^{\mathsf{T}}|^{2})ds+(\sigma_{1}^{\mathsf{T}}+ \tilde{Y}_{s}^{\mathsf{T}}\sigma_{2}^{\mathsf{T}})dW_{s}\right]\\
& \quad +(-f_{1}^{i}-f_{2} \tilde{Y}_{s}-\nabla X_{s}^{-1}(f_{3}^{i}(\nabla Z_{s}^{i})^{\mathsf{T}}))dt+\nabla X_{s}^{-1}\nabla Z_{s}^{i} dW_{s}\\
&\quad -\langle \nabla X_{s}^{-1}(\sigma_{1}^{\mathsf{T}}+ \tilde{Y}_{s}^{\mathsf{T}}\sigma_{2}^{\mathsf{T}}),\nabla Z_{s}^{i}\rangle ds\\
&= [-b_{1}\tilde{Y}_{s}^{i}-b_{2} \tilde{Y}_{s}\tilde{Y}_{s}^{i}-f_{1}^{i}-f_{2}\tilde{Y}_{s}-\nabla X_{s}^{-1}(f_{3}^{i}(\nabla Z_{s}^{i})^{\mathsf{T}})+|\sigma_{1}^{\mathsf{T}}+\tilde{Y}_{s}^{\mathsf{T}}\sigma_{2}^{\mathsf{T}}|^{2}\tilde{Y}_{s}^{i}\\
&\quad -\langle \nabla X_{s}^{-1}(\sigma_{1}^{\mathsf{T}}+\tilde{Y}_{s}^{\mathsf{T}}\sigma_{2}^{\mathsf{T}}),\nabla Z_{s}^{i} \rangle]ds+\left[\nabla X_{s}^{-1} \nabla Z_{s}^{i}-(\sigma_{1}^{\mathsf{T}}+ \tilde{Y}_{s}^{\mathsf{T}}\sigma_{2}^{\mathsf{T}})\tilde{Y}_{s}^{i}\right]dW_{s}.
\end{aligned} \label{dy}
\end{equation}
Denote
\begin{equation}
    \tilde{Z}_{s}^{i} \triangleq \nabla X_{s}^{-1}\nabla Z_{s}^{i}-(\sigma_{1}^{\mathsf{T}}+\tilde{Y}_{s}^{\mathsf{T}}\sigma_{2}^{\mathsf{T}})\tilde{Y}_{s}^{i}. \label{dz}
\end{equation}
By substituting \eqref{dz} to \eqref{dy}, we obtain
\begin{equation}
\begin{aligned}
 d\tilde{Y}_{s}^{i} & =- f_{1}^{i}-f_{2}\tilde{Y}_{s}-b_{1}\tilde{Y}_{s}^{i}-b_{2}\tilde{Y}_{s}\tilde{Y}_{s}^{i}-f_{3}^{i}(\sigma_{1}+\sigma_{2}\tilde{Y}_{s})\tilde{Y}_{s}^{i}\\ 
    &\quad -\langle f_{3}^{i}+\sigma_{1}^{\mathsf{T}}+\tilde{Y}_{s}^{\mathsf{T}}\sigma_{2}^{\mathsf{T}},\tilde{Z}_{s}^{i} \rangle ds + \tilde{Z}_{s}^{i}dW_{s}\quad i=1,2,\cdots,n.
\end{aligned}\label{zong}
\end{equation}
Without loss of any generality, it is sufficient to prove the argument for the case where assumptions (M1) and (M3)(i) hold. Now we introduce the following two multi-dimensional BSDEs:
\begin{equation}
  \begin{aligned}
  \hat{Y}_{s}^{i} &= \int_{s}^{T}
    \hat{H}^{i}(r,\hat{Y}_{r},\hat{Z}_{r})dr-\int_{s}^{T}\hat{Z}_{r}^{i}dW_{r} \\
    &=\int_{s}^{T} ( f_{2}\hat {Y}_{r}+b_{1}\hat{Y}_{r}^{i}+b_{2} \tilde{Y}_{r}\hat{Y}_{r}^{i}+f_{3}^{i}\sigma_{1}\hat{Y}_{r}^{i}+f_{3}^{i}\sigma_{2}\tilde{Y}_{r}\hat{Y}_{r}^{i}\\
    &\quad +\langle f_{3}^{i}+\sigma_{1}^{\mathsf{T}}+ \tilde{Y}_{r}^{\mathsf{T}}\sigma_{2}^{\mathsf{T}},\hat{Z}_{r}^{i} \rangle ds-\int_{s}^{T}\hat{Z}_{r}^{i}dW_{r}, i =1,2,\cdots,n,
\end{aligned} \label{lips under bsde}  
\end{equation}
and
\begin{equation}
\begin{aligned}
\bar{Y}_{s}^{i} &= K +\int_{s}^{T}
   \bar{H}^{i}(r,\bar{Y}_{r},\bar{Z}_{r})dr -\int_{s}^{T}\bar{Z}_{r}^{i}dW_{r}\\
   &=  K +\int_{s}^{T}K +K|\bar{Y}^{i}_r|+\sum_{j=1}^{n}K |\bar{Y}^{j}_r|+dK^{2}|\bar{Y}^{i}_r|\\
   &\quad +\langle f_{3}^{i}+\sigma_{1}^{\mathsf{T}}+\tilde{Y}_{r}^{\mathsf{T}}\sigma_{2}^{\mathsf{T}},\bar{Z}_{r}^{i} \rangle dr-\int_{s}^{T}\bar{Z}_{r}^{i}dW_{r}, \quad i=1,2,\cdots,n. \label{lips upper BSDE} 
\end{aligned}
\end{equation}
It is obvious that $(0,0)$ is the solution of BSDE \eqref{lips under bsde} and $(\bar{y}_{s}, 0)$ is the solution of BSDE \eqref{lips upper BSDE}, where $\bar{y}_{s}$ is the unique solution of ODE on $[0,T]$
\begin{equation*}
    \bar{y}_{s} = H + \int_{s}^{T} (A\bar{y}_{r}+(K+dK^2)\bar{y}_{r}+B)dr,
\end{equation*}
where $H, B\in \mathbb{R}^n, A \in \mathbb{R}^{n\times n}$ and all the components of $H, B$ and $A$ are $K$. We are now going to show $\hat{Y}_{s} \le \tilde{Y}_{s} \le \bar{Y}_{s}$, for $s\in[t,T]$. \\
The generator of  BSDE \eqref{zong}  can be represented as the following form:
\begin{equation}
\begin{aligned}
   H^{i}(s,y,z)=& f_{1}+f_{2}y+b_{1}y^{i}+b_{2}\tilde{Y}_{s} y^{i}+f_{3}^{i}\sigma_{1}y^{i}+f_{3}^{i}\sigma_{2}\tilde{Y}_{s}y^{i}\\
   &\langle f_{3}^{i}+\sigma_{1}^{\mathsf{T}}+\tilde{Y}_{s}^{\mathsf{T}}\sigma_{2}^{\mathsf{T}},z^{i} \rangle\quad i=1,2,\cdots,n.
\end{aligned}
\end{equation}
Indeed, we can use the comparison theorem for BSDEs since $\tilde{Y},f_{i},b_{i},\sigma_{i}$ are bounded on $[t,T]$ under assumption (B2).  Moreover under the conditions (M1) and (M3)(i), we get that $\hat{H}^{i}(s,y,z)\leq H^{i}(s,\bar{y},z)$ when $y^{i} = \bar{y}^{i}, y^{j} \le \bar{y}^{j},j\neq i$ and $h_{1}^{i} \ge 0$, it follows from comparison theorem for multi-dimensional BSDEs that $ 0=\hat{Y}_{s}\leq \tilde{Y}_{s},  \text{ for } s \in [t,T]$.\\
In order to get the upper bound of $\tilde{Y}$, we represent the generator in another form:
\begin{equation}
    \begin{aligned}
    H^{\prime i}(s,y,z) &= f_{1}+f_{2}y+b_{1}y^{i}+f_{3}^{i}\sigma_{1}y^{i}+(b_{2}+f_{3}^{i}\sigma_{2})\tilde{Y}_{s}\tilde{Y}_{s}^{i}\\
   &\langle f_{3}^{i}+\sigma_{1}^{\mathsf{T}}+\tilde{Y}_{s}^{\mathsf{T}}\sigma_{2}^{\mathsf{T}},z^{i} \rangle, \quad i=1,2,\cdots,n.\\
    \end{aligned}
\end{equation}
Noting the non-negativity of $\tilde{Y}$ and $b_{2}+f_{3}^{i}\sigma_{2}\leq 0$ under condition (M3)(i), combining with condition (M1), for $y,\bar{y}\in \mathbb{R}^{n}$ satisfying $y^{i}=\bar{y}^{i}$ and $y^{j}\leq\bar{y}^{j},j\neq i$, we get 
\begin{equation*}
\begin{aligned}
H^{\prime i}(s,y,z) &\leq f_{1} +f_{2}\bar{y}+b_{1}\bar{y}^{i}+ f_{3}^{i} \sigma_{1}\bar{y}^{i}+\langle f_{3}^{i}+\sigma_{1}^{\mathsf{T}}+\tilde{Y}_{s}^{\mathsf{T}}\sigma_{2}^{\mathsf{T}},z^{i} \rangle\\
& \le K +\sum_{j=1}^{n}K |\bar{y}^{j}|+K|\bar{y}^{i}|+dK^{2}|\bar{y}^{i}|+\langle f_{3}^{i}+\sigma_{1}^{\mathsf{T}}+\tilde{Y}_{s}^{\mathsf{T}}\sigma_{2}^{\mathsf{T}},z^{i} \rangle\\
&=\bar{H}^{i}(s,\bar{y},z),\quad i=1,2,\cdots,n.
\end{aligned}
\end{equation*}
Combined with $\tilde{Y}_{T} = h_{1}\le K= \bar{Y}_{T}$, it follows from similar argument as above that $\tilde{Y}_{s} \le \bar{Y}_{s}=\bar{y}_{s}, \text{ for } s\in [t,T]$. For the case where
conditions (M1) and (M3)(i) hold, we can obtain $-\bar{y}_{s} \leq \tilde{Y}_{s}\leq 0$, for any $s\in[t,T]$ similarly.\\
 Finally, with above small duration results, we can show for any $t\in[0,T]$ and $x\in\mathbb{R}$, the existence and uniqueness of solutions of FBSDEs \eqref{small lip FBSDE} with similar argument used in Theorem \ref{Theorem:global}. The proof is now complete.
\end{proof}
\subsection{\texorpdfstring{$L^{p}$}.-solution and \texorpdfstring{$L^{p}$}. estimates of FBDSEs}

\indent $L^{p}$-theory of FBSDEs has important application in stochastic optimal control theory, especially in the derivation of Pontryagin type maximum principle for stochastic optimal controls with recursive utilities (see \cite{hu2017stochastic,hu2018global,li2014optimal}). Considering its importance, Yong \cite{yong2020p} proposed a question whether a $L^{2}$-solution is an adapted $L^{p}$-solution for some $p>2$. In this section, we give a positive answer. We first obtain $L^{p}$-solution of \eqref{FBSDE5}: adapted solution $(X,Y,Z)$ such that:
\begin{equation*}
   E\left[\sup _{t \in[0, T]}|X_{t}|^{p}+\sup _{t \in[0, T]}|Y_{t}|^{p}+\left(\int_{0}^{T}|Z_{t}|^{2} d t\right)^{p / 2}\right] < \infty,
\end{equation*}
by adding the following integrability conditions assumptions:\\
\textbf{(B1)}(ii$^{\prime}$)
\begin{equation*}
\mathbb{E}\left[\left(\int_{0}^{T}|b(t,0,0)|ds\right)^{p}+\left(\int_{0}^{T}|f(t,0,0,0)|ds\right)^{p}+\left(\int_{0}^{T}|\sigma(t,0,0)|^{2}ds\right)^{\frac{p}{2}}+|h(0)|^{p}\right]<\infty,
\end{equation*}
for some $p>2$. 
\begin{corollary}\label{Lp corollary}
Assume assumptions (B1)(i)(ii$^{\prime}$) and (M1),(M3) hold, then for any $t\in[0,T]$ and $x\in\mathbb{R}$, the FBSDE \eqref{small lip FBSDE} has a unique $L^{p}$-solution on $[t,T]$. Further the random function $u$ defined on $\Omega\times[0,T]\times\mathbb{R}$ by 
\begin{equation*}
    u(\omega,t,x) = Y_{t}^{t,x}(\omega),
\end{equation*}
satisfies for any $t\in[0,T]$ and $s\in[t,T]$,
\begin{equation*}
    Y_{s}^{t,x} = u(s,X^{t,x}_{s}), ~~P\text{-}a.s.
    \end{equation*}
    and
    \begin{equation}
\left|\frac{u(t,x_{1})-u(t,x_{2})}{x_{1}-x_{2}}\right|\leq C \label{lp decoupling}
    \end{equation}
    for some constant $C$ only depending on $n,K,T$ and for all $x_1,x_2\in\mathbb{R}$ and $x_1\neq x_2$. In particular, FBSDE \eqref{FBSDE5} has a unique $L^p$-solution. 
\end{corollary}
\begin{proof}
 It is obvious that under assumption (B1)(i)(ii$^{\prime}$), all the conditions of the Theorem 2.3 in \cite{yong2020p} are satisfied, from which we get the unique $L^{p}$-solution in small duration. The remainder of the argument is analogous to that in Theorem \ref{Lipschitz corollary}.
\end{proof}

\indent Next we consider the $L^{p}$ estimates of FBSDEs, that is, for any $t\in[0,T]$ and $x\in\mathbb{R}$,
\begin{equation*}
   \mathbb{E}\left[\sup _{s \in[t, T]}|X_{s}^{t,x}|^{p}+\sup _{t \in[t, T]}|Y_{s}^{t,x}|^{p}+\left(\int_{t}^{T}|Z_{s}^{t,x}|^{2} d t\right)^{p / 2}\right] < C(1+|x|^{p})
\end{equation*}
for some $p\geq 2$, and now we need to strengthen the integrability condition to the following linear growth condition. \begin{itemize}
\item [\textbf{(B1)}(ii$^{\prime\prime}$)]
There exists constant $K>0$ such that for any $t \in[0, T]$, and $(x, y, z) \in \mathbb{R} \times \mathbb{R}^n\times \mathbb{R}^{n \times d}$,
\begin{equation*}
|b(t, x, y)|+|\sigma(t, x, y)|+|f(t, x, y, z)|+|h(x)| \leq K(1+|x|+|y|+|z|).
\end{equation*}
\end{itemize}
\begin{Theorem}
Let assumptions (B1)(i)(ii$^{\prime\prime}$) and (M1)(M3) hold, then for any $t\in[0,T]$ and $x\in\mathbb{R}$, FBSDE \eqref{small lip FBSDE} has a unique $L^{p}$-solution for any $p\geq 2$ and $L^{p}$ estimates hold, that is, \begin{equation}
\mathbb{E}\left[\sup _{t \leq s \leq T}\left|X_s^{t, x}\right|^p+\sup _{t\leq s \leq T}\left|Y_s^{t, x}\right|^p+\left(\int_s^T\left|Z_s^{t, x}\right|^2 d s\right)^{\frac{p}{2}}\right] \leq C\left(1+|x|^p\right).\label{global lp} \end{equation}\label{lp solution}
\end{Theorem}
\begin{proof}
First of all, for any $t\in[0,T]$, the existence and uniqueness of $L^{p}$-solution of FBSDE \eqref{small lip FBSDE} on $[t,T]$ is obtained from Corollary \ref{Lp corollary}, in particular, \eqref{lp decoupling} shows that the coefficients of FBSDE \eqref{small lip FBSDE} satisfy the same assumptions in the interval $[t,T]$. 
Applying  $L^{p}$ estimates results in small duration (see \cite{li2014optimal,yong2020p,delarue2002existence}), we can divide the time $[t,T]$ into $m$ intervals such that $t_{i+1}-t_{i} \leq \delta$, which depends on $K,C,p$, and there exists a constant $C^{(1)}$ such that 
\begin{equation}
\mathbb{E}\left\{\sup_{t_i \leq t \leq t_{i+1}}\left[\left|X_t^{t,x}\right|^p+\left|Y_t^{t,x}\right|^p\right]+\left(\int_{t_i}^{t_{i+1}} \left|Z_t^{t,x}\right|^2 d t\right)^{\frac{p}{2}}\right\} 
\leq \mathbb{E}\left[C^{(1)} (1+\left|X_{t_i}^{t,x}\right|^p)\right], \quad i=0,1,2,\cdots,m-1.
 \end{equation}
 We first consider the cases where $i=1,2$,
\begin{equation*}
\mathbb{E}\left[\sup _{t \leq s \leq t_{1}}\left|X_s^{t, x}\right|^p+\sup _{t \leq s \leq t_{1}}\left|Y_s^{t, x}\right|^p+\left(\int_t^{t_{1}}\left|Z_s^{t, x}\right|^2 d s\right)^{\frac{p}{2}}\right] \leq C^{(1)}\left(1+\left|x\right|^p\right),
\end{equation*}
and
\begin{equation*}
\mathbb{E}\left[\sup _{t_{1}\leq s \leq t_{2}}\left|X_s^{t, x}\right|^p +\sup _{t_{1} \leq s \leq t_{2}}\left|Y_s^{t, x}\right|^p+\left(\int_{t_{1}}^{t_{2}}\left|Z_s^{t, x}\right|^2 d s\right)^{\frac{p}{2}} \right] \leq \mathbb{E}\left[C^{(1)}\left(1+\left|X_{t_{1}}^{t, x}\right|^p\right)\right].
\end{equation*}
From the case $i=1$, we have
\begin{equation*}
\begin{aligned}
\mathbb{E}\left[C^{(1)}\left(1+\left|X_{t_{1}}^{t, x}\right|^p\right)\right] & \leq C^{(1)}\left(1+C^{(1)}\left(1+|x|^p\right)\right) \\
& \leq\left(C^{(1)}+\left(C^{(1)}\right)^2\right)\left(1+|x|^p\right).
\end{aligned}
\end{equation*}
Let $C^{(2)}=2 C^{(1)}+\left(C^{(1)}\right)^2$, it follows that
\begin{equation*}
C^{(1)}\left(1+|x|^p\right)+\mathbb{E}\left[C_1^{(1)}\left(1+\left|X_{t_{1}}^{t, x}\right|^p\right)\right] \leq C^{(2)}\left(1+|x|^p\right).
\end{equation*}
Adding on both sides of cases $i=1$ and $i=2$, we have
\begin{equation*}
\mathbb{E}\left[\sup _{t\leq s \leq t_{2}}\left|X_s^{t, x}\right|^p +\sup _{t \leq s \leq t_{2}}\left|Y_s^{t, x}\right|^p+\left(\int_t^{t_{1}}\left|Z_s^{t, x}\right|^2 d s\right)^{\frac{p}{2}}  +\left(\int_{t_{1}}^{t_{2}}\left|Z_s^{t,x}\right|^2 d s\right)^{\frac{p}{2}}\right] \leq C^{(2)}\left(1+|x|^p\right).
\end{equation*}
 Let $\hat{C}^{(2)}=2^{\frac{p}{2}} C^{(2)}$, we obtain
\begin{equation*}
\mathbb{E}\left[\sup _{t \leq s \leq t_{2}}\left|X_s^{t, x}\right|^p+\sup _{t\leq s \leq t_{2}}\left|Y_s^{t, x}\right|^p+\left(\int_t^{t_{2}}\left|Z_s^{t, x}\right|^2 d s\right)^{\frac{p}{2}}\right] \leq \hat{C}^{(2)}\left(1+|x|^p\right),
\end{equation*}
which is from the inequality $(a+b)^{k}\leq 2^{k}(a^{k}+b^{k})$, for $a,b\geq 0, k\geq 1$.

Then we can get the similar estimates for the case $i=3,4,\cdots,m-1$, and we obtain the $L^{p}$ estimates
\begin{equation*}
\mathbb{E}\left[\sup _{t \leq s \leq T}\left|X_s^{t, x}\right|^p+\sup _{t\leq s \leq T}\left|Y_s^{t, x}\right|^p+\left(\int_t^T\left|Z_s^{t, x}\right|^2 d s \right)^{\frac{p}{2}} \right]\leq \hat{C}^{(n)}\left(1+\left|x\right|^p\right),
\end{equation*}
now let $C= \hat{C}^{(m)}$, \eqref{global lp} holds, which ends the proof.
\end{proof}

We conclude this subsection by the following statements. The monotonicity conditions we introduced are a class of proper conditions which guarantee the  equivalence of $L^2$-solution and $L^p$-solution $(p>2)$ under linear growth condition. This provides a positive answer to a question proposed in \cite{yong2020p}. Moreover, $L^{p}$ estimates of FBSDEs are usually established only for small time horizon (see \cite{li2014optimal}) and it is quite challenging to get $L^{p}$ estimates globally. Our approach provides $L^{p}$ estimates globally under suitable conditions.

\section{Link to time-delayed BSDEs}
\paragraph{}
Since we do not require the non-degeneracy of the forward diffusion process $\sigma$ in our solvability result, we are able to build a natural connection between FBSDEs and time-delayed BSDEs, which allows us to get global solvability for some kind of time-delayed BSDEs. To the best of our knowledge, few researches concern the global solvability for time-delayed BSDEs. \cite{delong2010backward} obtained the existence and uniqueness of solution for time-delayed BSDEs and arbitrarily large time horizon, however, only for special kind of generators. When the generator has quadratic growth and depends only on recent delay of value process, \cite{briand2013simple} provided a constructive approach to get the existence and uniqueness of solution, see also Luo \cite{luo2020type} for multidimensional case.\\
\indent Relying on our global solvability result for diagonal FBSDEs, we get global existence and uniqueness of  solutions for BSDE with a special kind of delay, which extends the results of \cite{luo2018bsdes}. Indeed, we consider the following BSDE with delay in the value process:
\begin{equation}
Y_{t}=\xi+\int_{t}^{T} g\left(s, \int_{0}^{s} Y_{r} d r, Z_{s}\right) d s-\int_{t}^{T} Z_{s} d W_{s}, \quad t \in[0, T].    \label{BSDE}
\end{equation}
\indent We denote by $\mathcal{D}^{1,2}$ the space of all Malliavin differentiable random variables and for $\xi \in \mathcal{D}^{1,2}$ denote by $D_{t} \xi$ its Malliavin derivative. We refer to \cite{nualart2006malliavin} for a thorough treatment of the theory of Malliavin calculus. Let $K$ be a positive constant, we make the following assumptions:
\vspace{0.5em}
\begin{itemize}
    \item [\textbf{(C0)}]$g:\Omega \times[0, T] \times \mathbb{R}\times \mathbb{R}^{d} \rightarrow \mathbb{R}$ is a continuous function and $g(t,y,z) $ is decreasing with $y$.
    \vspace{0.5em}
    \item [\textbf{(C1)}] g is deterministic and  there exists a non-decreasing function $\rho: \mathbb{R}_{+} \rightarrow \mathbb{R}_{+}$ such that
\begin{equation*}
\begin{aligned}
&\left|g(t, y, z)-g\left(t, y^{\prime}, z^{\prime}\right)\right| \leq K\left|y-y^{\prime}\right|+\rho\left(|z| \vee\left|z^{\prime}\right|\right)\left|z-z^{\prime}\right|,\\
&\left|g(t, y, z)-g\left(t, y^{\prime}, z\right)-g\left(t, y, z^{\prime}\right)+g\left(t, y^{\prime}, z^{\prime}\right)\right| \leq K\left|y-y^{\prime}\right|\left(\left|z-z^{\prime}\right|\right) 
\end{aligned}
\end{equation*}
for all $ t \in[0, T], y, y^{\prime} \in \mathbb{R}$  and $ z, z^{\prime} \in \mathbb{R}^{d}$.
\vspace{0.5em}
    \item [\textbf{(C2)}]$\xi$ is $\mathcal{F}_{T}$-measurable such that $\xi \in \mathcal{D}^{1,2}\left(\mathbb{R}\right)$ and
\begin{equation*}
\left|D_{t} \xi \right| \leq K , \quad t \in[0, T].
\end{equation*}
    \item [\textbf{(C3)}] It holds that
\begin{equation*}
\begin{aligned}
&\left|g(t, y, z)-g\left(t, y^{\prime}, z^{\prime}\right)\right| \leq K\left|y-y^{\prime}\right|+K\left(1+|z|+\left|z^{\prime}\right|\right)\left|z-z^{\prime}\right|,\\
&|g(t, y, z)| \leq K\left(1+|z|^{2}\right)
\end{aligned}
\end{equation*}
 for all $t \in[0, T], y, y^{\prime} \in \mathbb{R}$ and $z, z^{\prime} \in \mathbb{R}^{d}$.
 \vspace{0.5em}
    \item [\textbf{(C4)}] $\xi$ is $\mathcal{F}_{T}$-measurable such that there exists a constant $K \geq 0$ such that $|\xi| \leq K$.
    \vspace{0.5em}
    \item [\textbf{(C5)}]$g:[0, T] \times\Omega \times \mathbb{R} \times \mathbb{R}^{d} \rightarrow \mathbb{R}$ is a measurable function and $E\left[\int_{0}^{T}\left|g(t,0,0)\right|^{2} d t\right]<+\infty$. Moreover, the function $g(t,\cdot, \cdot)$ is continuous for each $t \in[0, T]$ such that
\begin{equation*}
\left|g(t,y, z)-g\left(t,y^{\prime}, z^{\prime}\right)\right| \leq K(|y-y^{\prime}|+|z-z^{\prime}|)
\end{equation*}
for all $ y,y^{\prime} \in \mathbb{R}$ and $z,z^{\prime} \in \mathbb{R}^{d}$.
\vspace{0.5em}
    \item [\textbf{(C6)}]$\xi$ is $\mathcal{F}_{T}$-measurable such that  $\xi \in L^{2}(\mathcal{F}_{T};\mathbb{R})$.
\end{itemize}

\begin{Theorem}
(i) If (C0)-(C2) hold, then BSDE \eqref{BSDE} admits a unique solution $(Y,Z) \in \mathcal{S}^{2}\left(\mathbb{R}\right) \times \mathcal{H}^{2}\left(\mathbb{R}^{d}\right)$  satisfying that Z is bounded.\\
(ii) If (C0), (C3)-(C4) hold, then there exist constants $C_{1}, C_{2} \geq 0$ such that BSDE \eqref{BSDE} admits a unique solution $(Y, Z) \in$ $\mathcal{S}^{2}\left(\mathbb{R}\right) \times \mathcal{H}^{2}\left(\mathbb{R}^{d}\right)$ satisfying $|Y| \leq C_{1}$ and $\|Z \cdot W\|_{B M O} \leq C_{2}$.\\
(iii) If (C0), (C5)-(C6) hold, then BSDE \eqref{BSDE} admits a unique solution $(Y,Z)\in \mathcal{S}^{2}\left(\mathbb{R}\right) \times \mathcal{H}^{2}\left(\mathbb{R}^{d}\right) $.
\end{Theorem}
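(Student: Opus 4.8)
The plan is to recast the time-delayed equation \eqref{BSDE} as a fully coupled FBSDE of the form \eqref{FBSDE} with $n=1$ and a degenerate forward diffusion, and then to invoke Theorem \ref{Theorem:global}. I set $X_{t}:=-\int_{0}^{t}Y_{r}\,dr$, so that $X_{0}=0$, $dX_{t}=-Y_{t}\,dt$ and $\int_{0}^{s}Y_{r}\,dr=-X_{s}$. Substituting this into \eqref{BSDE}, the triple $(X,Y,Z)$ solves
\begin{equation*}
\left\{\begin{array}{l}
X_{t}=\int_{0}^{t}(-Y_{s})\,ds,\\
Y_{t}=\xi+\int_{t}^{T}g(s,-X_{s},Z_{s})\,ds-\int_{t}^{T}Z_{s}\,dW_{s}
\end{array}\right.
\end{equation*}
which is precisely \eqref{FBSDE} with data $b(t,x,y)=-y$, $\sigma\equiv0$, $h(x)=\xi$ and $f(t,x,y,z)=g(t,-x,z)$. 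Conversely, any solution of this FBSDE yields, upon differentiating the forward equation, a solution of \eqref{BSDE}, and the correspondence is a bijection on solution sets; since our approach never uses non-degeneracy of $\sigma$, the choice $\sigma\equiv0$ is admissible. Hence each of the three assertions follows once the hypotheses of Theorem \ref{Theorem:global} are verified in the corresponding regime, and uniqueness transfers back through the bijection.

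I would first dispose of the structural and monotonicity requirements, which hold uniformly across (i)--(iii). As $n=1$, the diagonal structure in (H)(iii) and the cross-monotonicity (M2) are vacuous. The drift $b(t,x,y)=-y$ is Lipschitz with linear growth, giving (H)(i), and it is decreasing in $y$, giving (M1). The generator $f(t,x,y,z)=g(t,-x,z)$ does not depend on $y$ and, by (C0), is increasing in $x$ because $g$ is decreasing in its second (delayed value) argument; the terminal $h(x)=\xi$ is constant and thus non-decreasing in $x$. Together these give (M3). It therefore remains only to match the growth and Lipschitz hypotheses on $g$, together with the integrability of $\xi$, to one of (A1)--(A3).

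The Lipschitz and quadratic cases are then immediate. For (iii), condition (C5) makes $f$ uniformly Lipschitz, i.e. (A1)(ii), while (C6) gives $h(0)=\xi\in L^{2}$, i.e. (A1)(i); since (H)(ii) and (A1) already permit a random terminal function $h$, Theorem \ref{Theorem:global} applies directly and produces a unique $(X,Y,Z)$, hence a unique solution of \eqref{BSDE}. For (ii), condition (C3) is exactly the quadratic growth hypothesis (A2)(iii), $\sigma\equiv0\in\mathcal{H}^{2}$ gives (A2)(ii), and (C4) gives $|h|=|\xi|\le K$, i.e. (A2)(i) with $\lambda=K$; again a bounded random terminal is permitted, so Theorem \ref{Theorem:global} yields a unique solution, and the bounds $|Y|\le C_{1}$ and $\|Z\cdot W\|_{\mathrm{BMO}}\le C_{2}$ are the a priori estimates furnished by the quadratic local theory and preserved through the finitely many decoupling-field gluing steps.

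The super-quadratic case (i) is where the main obstacle lies. Condition (C1) reproduces the super-quadratic structure (A3)(iii) with the same function $\rho$ and the same cross term, but (A3)(i) demands a deterministic terminal, whereas here $h(x)=\xi$ is genuinely random, so Theorem \ref{Theorem:global} cannot be quoted verbatim. The remedy is supplied by (C2): the bound $|D_{t}\xi|\le K$ on the Malliavin derivative. The decisive a priori input in the super-quadratic regime is a uniform bound on $Z$ --- indeed the Lipschitz-in-$x$ requirement in (A3)(iii) is imposed only on the region $|z|\le M=8K^{2}\sqrt{dn}$ --- and in the Markovian setting this bound arises from a gradient estimate on the decoupling field. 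Here I would instead obtain it from the Malliavin representation $Z_{t}=D_{t}Y_{t}$ combined with $|D_{t}\xi|\le K$ and a Gronwall estimate along the associated variational equation. Once $Z$ is bounded by a constant, the generator is effectively Lipschitz on the relevant range, so the comparison argument of Theorem \ref{Theorem :small duration} and the gluing of Theorem \ref{Theorem:global} carry over unchanged and deliver a unique solution with bounded $Z$. Establishing this uniform $Z$-bound for a non-Markovian terminal, in place of the Markovian gradient estimate, is the technical heart of part (i).
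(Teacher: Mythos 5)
Your proposal is correct and follows essentially the same route as the paper: the substitution $X_{t}=-\int_{0}^{t}Y_{r}\,dr$ converts \eqref{BSDE} into the FBSDE with $b(t,x,y)=-y$, $\sigma\equiv 0$, $h=\xi$, $f(t,x,y,z)=g(t,-x,z)$, after which (M1)--(M3) follow from (C0) and the result is read off from Theorem \ref{Theorem:global}. You are in fact more careful than the paper in part (i): the paper's one-line proof applies Theorem \ref{Theorem:global} verbatim even though (A3)(i) requires deterministic data while $\xi$ is random, and your observation that the Malliavin bound (C2) must be used to recover the uniform $Z$-estimate in this non-Markovian setting correctly identifies the step that the paper glosses over.
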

\begin{proof}
Define the function $b: \mathbb{R} \rightarrow \mathbb{R} $ by setting for $y \in \mathbb{R},  b(y)=y $. For $t \in[0, T]$, put
\begin{equation*}
X_{t}=\int_{0}^{t} b\left(Y_{s}\right) d s.
\end{equation*}
Thus BSDE \eqref{BSDE} is equivalent to the following FBSDE
\begin{equation}
\left\{\begin{array}{l}
X_{t}=\int_{0}^{t} b\left(Y_{s}\right) d s, \\
Y_{t}=\xi+\int_{t}^{T} g\left(s, X_{s}, Z_{s}\right) d s-\int_{t}^{T} Z_{s} d W_{s}.
\end{array}\right.
\end{equation}
It is obvious that function $b,g$ satisfies assumption (M2). Therefore, the statements (i) (ii) (iii) follows immediately from Theorem \ref{Theorem:global}.
\end{proof}
\section{Applications}
\subsection{Application to stochastic differential games}
\indent We consider a game with $n$ players and a common state controlled by all players. The dynamics of controlled state is given by  
\begin{equation}
    d X_{t} = b(t,X_{t},\alpha_{t})dt + \sigma(t,X_{t})dW_{t},\quad X_{0} = x, \label{state}
\end{equation}
where $\alpha_{t}=( \alpha^{1},\alpha^{2},\cdots,\alpha^{n})$ and $\alpha^{i}$ is the control chosen by player $i$. We denote
\begin{equation*}
    \mathcal{A}_i :=\left\{\alpha^{i}: [0,T] \times \Omega \rightarrow \mathbb{R} \text{ is a progressive process such that }
      E\left[ \int_{0}^{T}|\alpha_{t}^{i}|^{2}dt \right] < \infty \right \},
\end{equation*}
which is the set of admissible controls for the player $i$,  and $\mathcal{A}=\prod_{i=1}^n \mathcal{A}_i $. The goal of player $i$ is to minimize the payoff functional
\begin{equation*}
    J^{i}(\alpha) = \mathbb{E}\left[\int_{0}^{T} f^{i}(s,X_{s},\alpha_{s}^{i})ds+g^{i}(X_{T})\right].
\end{equation*}
More precisely, the state dynamics and the running and terminal cost functions are specified by the following assumptions.\\
\textbf{Assumption (G).}
Let $b:\Omega \times [0,T] \times \mathbb{R} \times \mathbb{R}^{n} \rightarrow \mathbb{R}$, $\sigma:\Omega \times [0,T]\times \mathbb{R}\rightarrow \mathbb{R}^{d}$, $f^{i}:\Omega \times [0,T]\times \mathbb{R}\times \mathbb{R}\rightarrow \mathbb{R}$ are progressive measurable functions, $g^{i}: \Omega \times \mathbb{R} \rightarrow \mathbb{R}$ is $\mathcal{F}_{T}$-measurable and for $1\leq i \leq n$ and they satisfying the following properties:
    \begin{itemize}
        \item[(i)] $b$ is affine in $(x,\Vec{a})$, i.e, it is of the form \begin{equation*}
    b(t,x,\Vec{a}) =b_{0}(t)+b_{1}x+b_{2}\Vec{a}, \end{equation*}
    where the mapping $b_{0}(t),b_{1}(t):\Omega \times [0,T] \rightarrow \mathbb{R}$, $b_{2}(t):\Omega \times [0,T] \rightarrow \mathbb{R}^{n}$ are progressive measurable and bounded.
        \item [(ii)] 
        For all $(\omega,t)\in \Omega \times [0,T]$, the mapping $(x,a)\rightarrow f^{i}(t,x,a)$ is convex with $f^{i}$ being strict convex in $a$. 
          \item[(iii)] $f^{i}(t,\cdot,\cdot)$ and $ g(\cdot)$ are twice continuously differentiable. The partial derivatives $\partial_x f^{i}$ and $\partial_{a} f^{i}$ (respectively $\partial_x g^{i}$ ) are at most of linear growth in $(x, a)$ (respectively in $x)$, uniformly in $t \in[0, T]$. And the second order derivatives $f^{i}_{aa},f^{i}_{ax},f^{i}_{xx},g^{i}_{xx}$ are bounded.
          \item[(iv)]  There exist some $\epsilon_{0} > 0 $ such that $f^{i}_{aa} \geq \epsilon_{0}$ and $g^{i}_{xx}\geq 0$.
          \item[(v)] The function $\sigma (t,x)$ is uniformly  Lipschitz continuous with $x$ and at most linear growth in $x$.  
\end{itemize}
\indent Now, we are aiming to find the Nash equilibrium of this game.
\begin{definition}
A set of admissible strategy profiles $\hat{\alpha}=\left(\hat{\alpha}^{1}, \cdots, \hat{\alpha}^{n}\right) \in \mathcal{A}$ is said to be a Nash equilibrium for the game if:
\begin{equation*}
\forall i \in\{1, \cdots, n\}, \quad \forall \alpha^{i} \in \mathcal{A}_{i}, \quad J^{i}(\hat{\alpha}) \leqslant J^{i}\left(\alpha^{i}, \hat{\alpha}^{-i}\right),
\end{equation*}
where $\left(\alpha^{i}, \hat{\alpha}^{-i}\right)$ stands for the strategy profile $\left(\hat{\alpha}^{1}, \cdots, \hat{\alpha}^{i-1}, \alpha^{i}, \hat{\alpha}^{i+1}\right)$, in which the player $i$ chooses the strategy $\alpha^{i}$ while the others, indexed by $j \in\{1, \cdots, n\} \backslash\{i\}$, keep the original ones $\hat{\alpha}^{j}$.
\end{definition}
\indent We will characterize the Nash equilibrium by an appropriate FBSDE relying on stochastic maximum principle (see \cite{carmona2018probabilistic}). We define for each $i$ the (reduced) Hamiltonian $H^{i}:\Omega \times [0,T]\times \mathbb{R} \times \mathbb{R}\times \mathbb{R}^{n} \rightarrow \mathbb{R}$
by
\begin{equation*}
    H^{i}(t,x,y^{i},\Vec{a}) = (b_{0}(t)+b_{1}(t)x+b_{2}(t)\Vec{a})y^{i}+f^{i}(t,x,a^{i}),
\end{equation*}
where $\Vec{a}=(a^1,\cdots,a^n)$.
Then the corresponding optimal control process is given by 
\begin{equation*}
    b_{2}^{i}(t)y^{i}+f_{a}^{i}(t,x,\hat{\alpha}^{i}) = 0.
\end{equation*}
Under assumption (G)(iv), we can use the inverse function theorem to derive that there exists a uniform Lipschitz continuous function $h^{i}(t,x,\cdot):\mathbb{R}\rightarrow \mathbb{R}$, which is the inverse of the function $f_{a}^{i}(t,x,\cdot)$ such that 
\begin{equation*}
\hat{\alpha}^{i}(t,x,y^{i})
   = h^{i}(t,x,-b_{2}^{i}(t)y^{i}).
  \end{equation*}
\indent We consider the following FBSDE
\begin{equation}
\left\{\begin{aligned}
X_{t}&= x+\int_{0}^{t}(b_{0}(s)+b_{1}(s)X_{s}+b_{2}(s)\hat{\alpha}(s,X_{s},Y_{s}))ds + \int_{0}^{t} \sigma(s,X_{s})dW_{s},\\
Y_{t}^{i}&=g_{x}^{i}(X_{T})+\int_{t}^{T}b_{1}(s)Y_{s}^{i}+f^{i}_{x}(s,X_{s},\hat{\alpha}^{i}(s,X_{s},Y_{s}^{i}))d s+\int_{t}^{T}Z_{s}^{i}d W_{s}, \quad i=1,2,\cdots,n,
\end{aligned}\right.\label{game}
\end{equation}
where $\hat{\alpha}(s,x,y)=\left(\hat{\alpha}^1(s,x,y^1),\cdots,\hat{\alpha}^n(s,x,y^n)\right)$ for $s\in[0,T]$, $x\in\mathbb{R}$ and $y\in\mathbb{R}^n$.
\begin{Theorem}
Let assumption (G) holds, then the FBSDE \eqref{game} has a unique solution. Consequently, $\hat{\alpha}= \hat{\alpha}(t,X_t,Y_t)$ is a Nash equilibrium.
\end{Theorem}
\begin{proof}
By applying the chain rule to the equation
\begin{equation*}
    f_{a}^{i}(t,x,\hat{\alpha}^{i}(t,x,y^{i}))=-b_{2}^{i}(t)y^{i},
\end{equation*}
we obtain that
\begin{equation*}
\begin{aligned}
 &f_{ax}^{i}(t,x,\hat{\alpha}^{i}(t,x,y^{i}))+f_{aa}^{i}(t,x,\hat{\alpha}^{i}(t,x,y^{i}))\partial_{x}\hat{\alpha}^{i}(t,x,y^{i})=0,\\
 &f_{aa}^{i}(t,x,\hat{\alpha}^{i}(t,x,y^{i}))\partial_{y^{i}}\hat{\alpha}^{i}(t,x,y^{i}) = -b_{2}^{i}(t).
\end{aligned}
   \end{equation*}
  Then we get
  \begin{equation*}
  \begin{aligned}
   &\partial_{x}\hat{\alpha}^{i}(t,x,y^{i}) = -\frac{f_{ax}^{i}(t,x,\hat{\alpha}^{i}(t,x,y^{i}))}{f_{aa}^{i}(t,x,\hat{\alpha}^{i}(t,x,y^{i}))},\\
&\partial_{y^{i}}\hat{\alpha}^{i}(t,x,y^{i}) =-\frac{b_{2}(t)}{f_{aa}^{i}(t,x,\hat{\alpha}^{i}(t,x,y^{i}))}.
   \end{aligned} \label{h x derivative}
      \end{equation*}
 From the boundedness of $f_{ax}$, $f_{aa}$ and $b_{2}$, we obtain that $\hat{\alpha}(t,x,y)$ is Lipschitz continuous with respect to $(x,y)$. Local boundedness of $\hat{\alpha}(t,x,y)$ follows from the $\partial_{\alpha}f$ is linear growth in $(x,a)$.\\
 Observe that 
 \begin{equation}
 \begin{aligned}
  \partial_{x}(b_{1}(t)y_{t}^{i}+f_{x}^{i}(t,x,\hat{\alpha}^{i}(t,x,y^{i})))
  &= f_{xx}^{i}(t,x,\hat{\alpha}^{i}(t,x,y^{i}))+f_{xa}^{i} (t,x,\hat{\alpha}^{i}(t,x,y^{i}))\partial_{x}\hat{\alpha}^{i}(t,x,y^{i})\\
  & = f_{xx}^{i}(t,x,\hat{\alpha}^{i}(t,x,y^{i})) -\frac{f_{xa}^{i}f_{ax}^{i}}{f_{aa}^{i}}(t,x,\hat{\alpha}^{i}(t,x,y^{i}))\\
  &=\frac{f_{xx}^{i}f_{aa}^{i}-f_{xa}^{i}f_{ax}^{i}}{f_{aa}^{i}}(t,x,\hat{\alpha}^{i}(t,x,y^{i}))\geq 0,
 \end{aligned}\label{f1}
\end{equation}
 where the last inequality follows form the convexity of function $f$.\\
Moreover, 
\begin{equation}
\begin{aligned}
 \partial_{y^{i}}(b_{0}(t)+b_{1}(t)x+b_{2}(t)\hat{\alpha}(t,x,y))
 &= -b_{2}^{i}(t) \partial_{y^{i}}\hat{\alpha}^{i}(t,x,y^{i})\\
&=\frac{-(b_{2}^{i}(t))^{2}}{f_{aa}^{i}(t,x,\hat{\alpha}^{i}(t,x,y^{i}))}\leq 0,\quad i=1,2,\cdots,n.
\end{aligned}\label{b2}
\end{equation}
Combining \eqref{f1}, \eqref{b2} and $g_{xx} \geq 0$ from assumption (G)(iv), it is obvious that the conditions needed in Theorem \ref{Lipschitz corollary} are satisfied, then \eqref{control} admits a unique solution. Then it follows from the Theorem 2.16 in \cite{carmona2018probabilistic} that $\hat{\alpha}$ is the Nash equilibrium.
\end{proof}
\subsection{Application to LQ problems with random coefficients}
\indent In this subsection, we study a  linear-quadratic stochastic control problem with random coefficients,  where the controlled state process is given by
\begin{equation*}
    d X_{t} = (A_{t}X_{t}+B_{t}u_{t})dt + \sigma(t,X_{t})dW_{t}, \quad X_{0} = x,
\end{equation*}
where $x \in \mathbb{R}$. The goal is to minimize the following cost functional 
\begin{equation*}
    J(u) = \mathbb{E}\left[\int_{0}^{T} (C_{s}X_{s}+D_{s}u_{s}+\frac{E_{s}}{2}X_{s}^{2}+\frac{F_{s}}{2}u_{s}^{2})ds+g(X_{T})\right].
\end{equation*}
The admissible set on which the cost function $J$ is minimized is 

\begin{equation*}
    \mathcal{U} :=\left\{u:[0,T] \times \Omega \rightarrow \mathbb{R} \text{ is a progressive process such that }  E\left[ \int_{0}^{T}|u_{t}|^{2}dt \right] < \infty \right \}.
\end{equation*}
\indent Now we list the assumptions on the coefficients appearing in the state dynamics and in the objective functional.\\
\textbf{Assumption (LQ)}
\begin{itemize}
    \item[(i)] $A,B,C,D,E,F$ are real-valued bounded  stochastic processes, and $E$ is non-negative, $F$ is positive.
    \item[(ii)] $\sigma: \Omega \times [0,T] \times \mathbb{R} \rightarrow \mathbb{R}^{d}$ is progressive measurable, uniform Lipschitz continuous with $x$ and at most linear growth in $x$.
     \item[(iii)] $g: \Omega \times \mathbb{R} \rightarrow \mathbb{R}$ is $\mathcal{F}_{T}$-measurable and differentiable in $x$. Moreover, $g_{x}$ is increasing in $x$,  uniform Lipschitz continuous with $x$ and at most linear growth in $x$.
\end{itemize}
The (reduced) Hamiltonian $H:\Omega \times [0,T]\times \mathbb{R} \times \mathbb{R}\times \mathbb{R} \rightarrow \mathbb{R} $ is given by 
\begin{equation*}
    H(t,x,y,u) = (A_{t}x+B_{t}u)y+C_{t}x+D_{t}u+\frac{E_{t}}{2}x^{2}+\frac{F_{t}}{2}u^{2}.
\end{equation*}
The minimizer of the Hamiltonian is
\begin{equation*}
    \hat{u}(t,y) = \frac{-B_{t}y-D_{t}}{F_{t}}.
\end{equation*}
The adjoint FBSDE associated with the stochastic maximal principle
\begin{equation}
\left\{\begin{array}{l}
X_{t}=x+\int_{0}^{t}(A_{s}X_{s}+B_{s}(\frac{-B_{s}Y_{s}-D{s}}{F_{s}}))ds+ \int_{0}^{t}\sigma(s,X_{s})dW_{s}, \\
Y_{t}= g_{x}(X_{T})+\int_{t}^{T}(A_{s}Y_{s}+E_{s}X_{s}+C_{s})d s+\int_{t}^{T}Z_{s}dW_{s}.
\end{array}\right.\label{control}
\end{equation} 
\begin{Theorem}
Suppose assumption (LQ) holds, then FBSDE \eqref{control} admits a unique solution. Moreover, $\hat{u}$ defined by $\hat{u}_{t}= \frac{-B_{t}Y_{t}-D_{t}}{F_{t}}$ is a optimal control over the interval $[0, T]$.
\end{Theorem}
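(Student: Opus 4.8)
The plan is to read \eqref{control} as a one-dimensional instance of the Lipschitz system \eqref{FBSDE5}, apply Theorem \ref{Lipschitz corollary} for well-posedness, and then deduce optimality from the sufficient stochastic maximum principle. First I would substitute the feedback minimizer $\hat{u}(t,y)=(-B_{t}y-D_{t})/F_{t}$ into the state equation, which puts \eqref{control} into the shape of \eqref{FBSDE5} with $n=1$ and coefficients
\[
b(t,x,y)=A_{t}x-\tfrac{B_{t}^{2}}{F_{t}}y-\tfrac{B_{t}D_{t}}{F_{t}},\quad \sigma=\sigma(t,x),\quad f(t,x,y)=A_{t}y+E_{t}x+C_{t},\quad h(x)=g_{x}(x).
\]
The backward equation of \eqref{control} carries $+\int_{t}^{T}Z_{s}\,dW_{s}$ instead of the $-\int_{t}^{T}Z_{s}\,dW_{s}$ of \eqref{FBSDE5}; since the generator does not involve $Z$, the substitution $\tilde{Z}=-Z$ reconciles the sign without altering $f$, so the identification is legitimate.

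Next I would verify the hypotheses of Theorem \ref{Lipschitz corollary}. Boundedness of $A,B,C,D,E,F$ together with $F$ bounded away from $0$ makes $b$ and $f$ globally Lipschitz in $(x,y)$, $\sigma$ is Lipschitz by (LQ)(ii) and $h=g_{x}$ by (LQ)(iii), and boundedness of the coefficients with $\mathbb{E}\int_{0}^{T}|\sigma(t,0)|^{2}\,dt<\infty$ gives the integrability in (B1)(iii). Condition (M2) is vacuous when $n=1$; for (M3) one has $\partial_{x}f=E_{t}\ge0$ and $g_{x}$ increasing, so $f$ and $h$ are increasing in $x$; and the difference quotient defining $b_{2}$ equals $-B_{t}^{2}/F_{t}$ while $f_{3}=0=\sigma_{2}$ (there is neither $z$- nor $y$-dependence), so (M4) reduces to $b_{2}=-B_{t}^{2}/F_{t}\le0$, which holds because $F>0$. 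Theorem \ref{Lipschitz corollary} then yields a unique $(X,Y,Z)\in\mathcal{S}^{2}(\mathbb{R})\times\mathcal{S}^{2}(\mathbb{R})\times\mathcal{H}^{2}(\mathbb{R}^{d})$, and in particular a unique $(Y,Z)$.

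For optimality I would invoke the sufficient stochastic maximum principle. The candidate $\hat{u}_{t}=(-B_{t}Y_{t}-D_{t})/F_{t}$ is admissible because $Y\in\mathcal{S}^{2}$ and $B,D,1/F$ are bounded, so $\hat{u}\in\mathcal{U}$. The Hamiltonian $H(t,x,y,u)$ is jointly convex in $(x,u)$, its Hessian in $(x,u)$ being $\mathrm{diag}(E_{t},F_{t})\succeq0$; the terminal cost $g$ is convex since $g_{x}$ is increasing; and $\hat{u}$ is the pointwise minimizer of $u\mapsto H(t,x,y,u)$, as $\partial_{u}H=B_{t}y+D_{t}+F_{t}u$ vanishes exactly at $u=\hat{u}(t,y)$ with $\partial_{uu}H=F_{t}>0$. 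Since $(X,Y,Z)$ solves the adjoint system \eqref{control}, the sufficient maximum principle (see \cite{carmona2018probabilistic}) shows that $\hat{u}$ is optimal over $[0,T]$.

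The main obstacle I anticipate lies in the verification of (B1) and (M4): both the Lipschitz property of $b$ (hence the admissibility of $\hat{u}$) and the decisive sign $b_{2}=-B_{t}^{2}/F_{t}\le0$ depend on $F$ being bounded and uniformly positive, which is the intended reading of ``$F$ is positive'' in (LQ)(i). Once the boundedness and sign of the coefficients are secured, the well-posedness is immediate from Theorem \ref{Lipschitz corollary} and the optimality follows from the standard convex maximum-principle comparison.
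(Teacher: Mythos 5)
Your proposal is correct and takes essentially the same route as the paper: the paper's proof is precisely the two-step appeal to Theorem \ref{Lipschitz corollary} for existence and uniqueness and to the sufficient stochastic maximum principle for optimality, and your verification of (B1), (M2)--(M4) (with $b_{2}=-B_{t}^{2}/F_{t}\le 0$ and $f_{3}=\sigma_{2}=0$) simply fills in the details the paper leaves implicit. Your caveat that ``$F$ is positive'' must be read as uniformly bounded away from zero for the Lipschitz bounds to hold is a fair observation, but it does not change the argument.
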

\begin{proof}
It can be easy verify that under assumption (LQ), the assumptions (B1)(i)(ii),(M1),(M3) hold, then it follows from Theorem \ref{Lipschitz corollary}, there exists a unique solution to FBSDE \eqref{control}. The optimal control statement is directly from the stochastic maximum principle.
\end{proof}
\bibliographystyle{siam}
\bibliography{mybib}
\end{document}